\numberwithin{equation}{section}
\newtheorem{remark}{Remark}[section]
\def\div{\mbox{div }}
\def\u{\mathbf{u}}
\def\e{\mathbf{e}}
\def\v{\textbf{v}}
\def\w{\textbf{w}}
\def\f{\mathbf{f}}
\def\V{\textbf{V}}
\def\c{\textbf{c}}
\def\n{\textbf{n}}
\def\P{\textbf{P}}
\def\i{\textbf{i}}
\def\A{\textbf{A}}
\renewcommand{\div}{\mbox{\rm div\,}}
\begin{document}
	
	\title{ A decoupled Crank-Nicolson leap-frog scheme for the unsteady bioconvection flows problem with concentration dependent viscosity
		}
	\markboth{CHENYANG LI
		}{Fully discretization for Bioconvection}

	\author{Chenyang Li\thanks{School of Mathematical Sciences, East China Normal University, Shanghai, P. R. China. (chenyangli1004@yeah.net).}
	}
	
	\maketitle
	\begin{abstract} 
A fully discrete Crank--Nicolson Leap--Frog (CNLF) scheme is proposed and analyzed for the unsteady bioconvection flow problem with concentration-dependent viscosity. Spatial discretization is handled via the Galerkin finite element method (FEM), while temporal discretization employs the CNLF method for the linear terms and a semi-implicit approach for the nonlinear terms. The scheme is proven to be unconditionally stable, i.e., the time step is not subject to a restrictive upper bound. Using the energy method, $L^2$-optimal error estimates are derived for the velocity and concentration . Finally, numerical experiments are presented to validate the theoretical results.
	\end{abstract}
	
	\begin{keywords}
	Bioconvection, Crank-Nicolson Leap-Frog, Mixed finite element, Convergent analysis
	\end{keywords}
	
	\begin{AMS}
		65N12, 
		65N15, 
		65N30, 
	\end{AMS}
	
	\section{Introduction}
	The bioconvection model is coupled by the Navier-Stokes
	type equations describe the flow of the incompressible viscous culture fluid and the advection-diffusion equations describe the transport of micro-organisms:
	\begin{align}
		\frac{\partial \u}{\partial t} - \div ( \nu(c) D(\u)) + \u \cdot \nabla \u + \nabla p = - g (1+\gamma c) \textbf{i}_2 + \textbf{f}, \quad  x \in \Omega, \, t>0,\label{biobdf-32}\\
		\nabla \cdot \u = 0, \quad  x \in \Omega, \, t>0, \\
		\frac{\partial c}{\partial t} -  \theta \Delta c + \u \cdot \nabla c + U \frac{\partial c}{\partial x_2}=0, \quad  x \in \Omega, \, t>0.\label{biobdf-33}
	\end{align}
	
	The unknowns in the bioconvection model are the concentration $c$, the velocity $\mathbf{u}$, and the pressure $p$, with the latter assumed to have zero mean for uniqueness. 
	Here, $\Omega \subset \mathbb{R}^d$ ($d=2$ or $3$) is a bounded domain with smooth boundary $\partial \Omega$, and $x_2$ denotes the second component of the spatial variable $x$. 
	The parameter $\theta>0$ represents the diffusivity, and the kinematic viscosity $\nu(c)$ depends on the concentration of the micro-organisms \cite{batchelor1972,brady1993}. 
	The stress tensor is defined as $D(\mathbf{u}) = \frac{1}{2} (\nabla \mathbf{u} + \nabla \mathbf{u}^{T})$, $\mathbf{f}$ denotes the external force, $g$ is the gravitational acceleration, and $U>0$ represents the mean upward swimming velocity of the micro-organisms.The parameter $\gamma>0$ characterizes the relative difference between the density $\rho_0$ of the micro-organisms and the density $\rho_m$ of the suspending fluid, defined by $\gamma = \frac{\rho_0}{\rho_m}-1$. 
	The term $-g (1+\gamma c) \mathbf{i}_2$ accounts for the gravitational force acting on the micro-organisms, while the term $U \frac{\partial c}{\partial x_2}$ models the effect of their average upward swimming.
	
	In classical Newtonian fluids, the viscosity is constant; however, this assumption is not valid for real suspensions, where the viscosity depends on the micro-organism concentration. 
	Exponent-type expressions of $\nu(c)$ as a function of $c$ have been extensively discussed in \cite{batchelor1972,brady1993,mooney1951,krieger1959}. 
	
We impose homogeneous Dirichlet boundary conditions for the velocity, $\mathbf{u}=0$, and a non-flux Robin boundary condition for the concentration:
\begin{align*}
	\theta \frac{\partial c}{\partial \n} - c U n_2 = 0,
\end{align*}
where $\mathbf{n} = (n_1,n_2)$ denotes the outward unit normal vector on $\partial \Omega$. 
Furthermore, we assume that the total mass of micro-organisms in the container is conserved, i.e.,
\begin{align}\label{biobdf-1}
	\frac{1}{|\Omega|} \int_{\Omega} c(x)\, dx = \alpha,
\end{align}
where $|\Omega|$ denotes the measure of $\Omega$ and $\alpha$ represents the average concentration.

Finally, the governing equations of bioconvection are prescribed on a bounded time interval $I=(0,T]$ with the following initial and boundary conditions:
\begin{align}\label{biobdf-45}
	\left\{\begin{aligned}
		\frac{\partial \u}{\partial t} - \div(\nu(c) D (\u)) + \u \cdot \nabla \u + \nabla p = - g (1+\gamma c) \textbf{i}_2 + \textbf{f}, \quad  &x \in \Omega, \, t>0,\\
		\nabla \cdot \u = 0, \quad  &x \in \Omega, \, t>0, \\
		\frac{\partial c}{\partial t} -\theta \Delta c + \u \cdot \nabla c + U \frac{\partial c}{\partial x_2}=0, \quad  &x \in \Omega, \, t>0,\\
		\frac{1}{|\Omega|} \int_{\Omega} c(x) dx = \alpha, \quad &x \in \Omega, \, t>0.\\
		\u=0, \,\, \theta \frac{\partial c}{\partial n} - c U n_2=0, \quad &x \in \partial \Omega, \, t>0.
	\end{aligned}\right.
\end{align}

This system models the coupled dynamics of micro-organism transport and fluid flow under the influence of gravity and chemotactic swimming, accounting for concentration-dependent viscosity and mass conservation.

The well-posedness of solutions for both time-dependent and steady bioconvection flow problems has been the subject of extensive study. For the system (\ref{biobdf-45}) with constant viscosity, the existence of solutions was established using the semi-group approach in \cite{kan1992}, and corresponding numerical investigations were reported in \cite{harashima1988}. In \cite{climent2013}, the authors proved the existence and uniqueness of periodic solutions for bioconvection flows with concentration-dependent density. More recently, under more general boundary conditions, \cite{cao2020} established the existence and uniqueness of weak solutions for system (\ref{biobdf-45}) under relatively mild and physically realistic assumptions on the viscosity, in particular the uniform boundedness condition $	\kappa \le \nu(x) \le \kappa^{-1}$. These results provide a rigorous theoretical foundation for the numerical analysis of bioconvection problems with non-constant viscosity and support the development of finite element schemes for such systems.

Considerable attention has also been devoted to the numerical analysis and simulation of bioconvection flow problems. 
Finite element approximations using stable velocity--pressure pairs have been developed, and $H^1$-norm error estimates for both velocity and concentration were established in \cite{cao2014}. 
Spectral Galerkin approximations for unsteady bioconvection flows and their convergence rates were investigated in \cite{aguiar2017}. 
A fully discrete finite element scheme based on the backward Euler method was proposed in \cite{cao2020}, where the corresponding $L^2$-norm error estimates were shown to be sub-optimal. 
For the steady bioconvection problem, \cite{colmenares2021} reformulated the system as a first-order formulation by introducing the shear-stress, vorticity, and pseudo-stress tensors in the fluid equations, together with an auxiliary vector in the concentration equation. 
Existence and uniqueness results were then established using the Lax--Milgram theorem and Banach fixed-point theory, and an augmented mixed finite element method was analyzed. 
Recently, both coupled and decoupled BDF2 finite element schemes have been derived in \cite{lichenyangbio}, providing optimal-order error estimates for velocity and concentration in both $L^2$- and $H^1$-norms on bounded domains. 
Moreover, by employing Stokes projection operators with concentration-dependent viscosity coefficients and leveraging mathematical induction, a linearized second-order Crank--Nicolson finite element scheme for the bioconvection model was developed in \cite{liyuanbio2025}, demonstrating enhanced temporal accuracy while maintaining rigorous error control.
Collectively, these studies lay a numerical foundation for the development of higher-order, stable, and efficient finite element schemes for both time-dependent and steady bioconvection problems, especially in the presence of concentration-dependent viscosity and complex boundary conditions.

 The objective of this paper is to introduce the CNLF fully discrete finite element scheme for solving the time-dependent bioconvection flows problem with concentration dependent viscosity. We employ the mini element (P1b--P1) for the approximation of velocity and pressure, and the piecewise linear (P1) element for the approximation of the concentration field.The fully  decoupled CNLF finite element method are proposed, and the unconditionally optimal convergent rate for the velocity and concentration in $L^2$-norm and $H^1$-norm are established. This semi-implicit treatment not only leads to a linear system at each time step, 
 but also plays a crucial role in maintaining the nearly unconditional stability 
 and preserving the convergence order of the corresponding nonlinear implicit schemes.

The remainder of this paper is organized as follows. 
In Section~\ref{sec-2}, we introduce the function and space notations, along with several preliminary results that will be used throughout the paper. 
We then reformulate the bioconvection problem and present the variational  formulation of the solution.  
In Section~\ref{sec-3}, we introduce the Crank--Nicolson Leap--Frog (CNLF) finite element approximation for the bioconvection model and derive the unconditional stability results. 
Section~\ref{sec-4}, which constitutes the main part of this work, is devoted to a rigorous analysis of the fully discrete CNLF finite element scheme using energy methods. 
The highlights of this section include the derivation of optimal-order error estimates in the energy norm for both velocity and concentration approximations. 
In Section~\ref{sec-5}, the theoretical results are validated through numerical experiments. 
Different viscosity models are employed to investigate the convergence behavior and to demonstrate the effectiveness of the proposed scheme. 

Throughout this paper, the symbol $C$ denotes a generic positive constant whose value may differ from one occurrence to another, but which is independent of the discretization parameters, namely, the time step size $\tau$ and the mesh size $h$.

	\section{Preliminaries and  useful facts}\label{sec-2}
Throughout this paper, we adopt the standard notation for vector-valued Sobolev spaces.
	\begin{align}
		\textbf{V}&=H^1_0 = \{  \v \in H^1(\Omega) ; \,\, \v |_{\partial \Omega} =0 \},\\
		\textbf{V}_0 &= \{ \v \in \textbf{V}; \,  \nabla \cdot \v=0 \,\, \text{in} \,  \Omega\},\\
		M &= L^2_0 (\Omega) = \{ q \in L^2(\Omega); \,\, \int_{\Omega} q dx =0    \},\\
		\tilde{H}&=H^1(\Omega)  \cap L^2_0(\Omega). 
	\end{align}

For $k\in N^+$ and $1\leq p\leq +\infty$, we denote $L^p(\Omega)$ and $W^{k, p}(\Omega)$ as the classical Lebesgue space and Sobolev space, respectively. The norms of these spaces are denoted by
\begin{align*}
	||u||_{L^p(\Omega)}&=\left(\int_{\Omega}|u(\mathbf{x})|^p d \mathbf{x}\right)^\frac{1}{p},\\
	||u||_{W^{k,p}(\Omega)}&=\left(\sum\limits_{|j|\leq k}||D^ju||_{L^p(\Omega)}^p\right)^\frac{1}{p}.
\end{align*}
within this context, $W^{k, 2}(\Omega)$ is also known as the Hilbert space and can be expressed as $H^k(\Omega)$.  $||\cdot||_{L^\infty}$ represents the norm of the space  $L^\infty(\Omega)$ which is defined as
\begin{equation*}
	||u||_{L^\infty(\Omega)}=ess\sup\limits_{\mathbf{x}\in \Omega}|u(\mathbf{x})|.
\end{equation*}

For any Hilbert space $D$, where $T>0$, with the corresponding continuous and discrete norms, respectively,
\[
\begin{aligned}
	L^p(0,T;D) &= 
	\left\{
	v : [0,T] \to D \ \Bigg| \
	\|v\|_{l^p(0,T;D)} =
	\left[ \int_0^T \|v(t)\|_D^p \, dt \right]^{1/p} < \infty
	\right\},
	\quad 1 \le p < \infty, \\[1em]
	L^\infty(0,T;D) &=
	\left\{
	v : [0,T] \to D \ \Bigg| \
	\|v\|_{l^\infty(0,T;D)} = 
	\operatorname*{ess\,sup}_{t \in [0,T]} \|v(t)\|_D < \infty
	\right\}, \\[1em]
	l^p(0,T;D) &=
	\left\{
	v : \{t_1, \cdots, t_M\} \to D \ \Bigg| \
	\|v\|_{L^p(0,T;D)} =
	\left[ \Delta t \sum_{i=1}^M \|v(t_i)\|_D^p \right]^{1/p} < \infty
	\right\},
	\quad 1 \le p < \infty, \\[1em]
	l^\infty(0,T;D) &=
	\left\{
	v : \{t_1, \cdots, t_M\} \to D \ \Bigg| \
	\|v\|_{L^\infty(0,T;D)} = 
	\max_{1 \le i \le M} \|v(t_i)\|_D < \infty
	\right\},
\end{aligned}
\]
where $D = L^2(\Omega), \V,$ or $W$.

For simplicity, we denote the inner products of both
$L^2(\Omega)$ and $\textbf{L}^2(\Omega)$
by $(\cdot,\cdot)$, and use $\langle \cdot,\cdot \rangle$ to denote the dual product of $H^{-1}(\Omega) \times H^1_0(\Omega)$. namely,
\begin{align*}
	\begin{split}
		&(u,v)=\int_\varOmega u(x)v(x) d x \quad \forall  \, u,v\in L^2(\Omega),\\
		&({\bf u},{\bf v})=\int_\Omega {\bf u}(x)\cdot {\bf v}(x) dx \quad \forall \, {\bf u},{\bf v}\in {\bf L}^2(\Omega) .\\
	\end{split}
\end{align*}

Denote 
\begin{align}
	\A(c,\u,\v)=(\nu(c+\alpha) \nabla \u,\nabla \v), \quad \forall \, c \in \tilde{H},\,  \u,\v \in \V.
\end{align}

To prove the unconditional stability and error estimate of the following spatial discrete schemes, we recall the following discrete Gronwall inequality established in \cite{evance2022,heywood1990}.
\begin{lemma} (Discirete Gronwall's inequality ) Let $a_k , b_k$ and $y_k$ be the nonnegative numbers such that \label{biobdf-11}
	\begin{align}\label{growninequality-discrete}
		a_n+ \tau \sum \limits^n \limits_{k=0}  b_k\leq \tau \sum \limits^n \limits_{k=0} \gamma _k a_k + B \quad \text{for} \,\, n \geq 1,
	\end{align}
	Suppose $\tau \gamma _k \leq 1$ and set $\sigma_k = (1-\tau \gamma_k) ^{-1}$. Then there holds
	\begin{align}\label{grown2}
		a_n + \tau \sum \limits^n \limits_{k=0} b_k \leq exp(\tau \sum \limits^n \limits_{k=0} \gamma_k \sigma_k) B \quad  \text{for} \,\, n\geq 1.
	\end{align}
	\begin{remark}
		If the sum on the right-hand side of (\ref{growninequality-discrete}) extends only up to $n-1$, then the estimate (\ref{grown2}) still holds for all $k \geq 1$ with $\sigma_k=1$.
	\end{remark}
\end{lemma}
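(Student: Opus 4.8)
The plan is to prove \eqref{grown2} by isolating the latest term $a_n$ from the summation on the right-hand side of \eqref{growninequality-discrete} and then running a product-type induction, deferring the dissipative sum $\tau\sum_{k=0}^n b_k$ until the very end. First I would split the right-hand side as $\tau\sum_{k=0}^n\gamma_k a_k + B = \tau\gamma_n a_n + G_n$, where $G_n := B + \tau\sum_{k=0}^{n-1}\gamma_k a_k$ collects all the earlier terms. Transferring $\tau\gamma_n a_n$ to the left yields
\begin{align*}
(1-\tau\gamma_n)\,a_n + \tau\sum_{k=0}^n b_k \le G_n,
\end{align*}
and the hypothesis $\tau\gamma_n\le 1$ guarantees that the coefficient $1-\tau\gamma_n$ is nonnegative, so that multiplying by $\sigma_n=(1-\tau\gamma_n)^{-1}$ is legitimate (when $\tau\gamma_n=1$ the factor $\sigma_n$ is $+\infty$ and the claimed bound is vacuous, so we may assume strict inequality). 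Since $b_k\ge 0$, discarding the dissipative sum gives the pointwise bound $a_n\le \sigma_n G_n$.

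The second step is a discrete Gronwall iteration on the auxiliary quantity $G_n$ itself. From $a_n\le\sigma_n G_n$ and the definition of $G_n$, I would derive the one-step recursion
\begin{align*}
G_{n+1}=G_n+\tau\gamma_n a_n\le (1+\tau\gamma_n\sigma_n)\,G_n,
\end{align*}
with $G_0=B$, and iterate it to obtain $G_n\le B\prod_{k=0}^{n-1}(1+\tau\gamma_k\sigma_k)$. Combining this with $a_n\le\sigma_n G_n$ and invoking the elementary inequality $1+x\le e^{x}$ together with the algebraic identity $\sigma_n=1+\tau\gamma_n\sigma_n$ (immediate from $\sigma_n(1-\tau\gamma_n)=1$), the stray factor $\sigma_n$ is absorbed into the exponential and the product combines into a single exponential, giving $a_n\le \exp\!\big(\tau\sum_{k=0}^n\gamma_k\sigma_k\big)B$.

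Finally, to recover the dissipative term I would return to the displayed inequality $(1-\tau\gamma_n)a_n+\tau\sum_{k=0}^n b_k\le G_n$ and multiply through by $\sigma_n\ge 1$, which yields $a_n+\sigma_n\tau\sum_{k=0}^n b_k\le\sigma_n G_n$; because $\sigma_n\ge1$ the left-hand side dominates $a_n+\tau\sum_{k=0}^n b_k$, while the right-hand side is already bounded by $\exp\!\big(\tau\sum_{k=0}^n\gamma_k\sigma_k\big)B$ from the previous step, and \eqref{grown2} follows. The only genuinely delicate point is the bookkeeping that keeps the nonnegative sum $\tau\sum_{k=0}^n b_k$ alive throughout: it must be dropped while bounding $a_n$ pointwise but reinstated only at the end via $\sigma_n\ge1$, rather than carried through the induction, where the accumulating $\sigma$-factors would otherwise corrupt its coefficient. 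For the Remark, in which the summation stops at $n-1$, there is no $\tau\gamma_n a_n$ term to transfer, so the rearrangement step is unnecessary, $\sigma_k$ may be replaced by $1$, and the same product induction applies verbatim.
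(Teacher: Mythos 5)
The paper never proves this lemma; it is recalled verbatim (with typos) from the cited references \cite{evance2022,heywood1990}, so there is no in-paper proof to compare against. Your argument is correct and is essentially the classical Heywood--Rannacher proof: isolating $\tau\gamma_n a_n$ and multiplying by $\sigma_n$, iterating the recursion $G_{n+1}\le(1+\tau\gamma_n\sigma_n)G_n$ from $G_0=B$, and converting the product to an exponential via $1+x\le e^{x}$ together with the identity $\sigma_n=1+\tau\gamma_n\sigma_n$. Your bookkeeping for the dissipative sum is also sound: it must indeed be dropped while deriving the pointwise bound $a_n\le\sigma_n G_n$, and it is legitimately reinstated at the end because $\sigma_n(1-\tau\gamma_n)=1$ and $\sigma_n\ge1$ give $a_n+\tau\sum_{k=0}^n b_k\le\sigma_n G_n$. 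The reduction to strict inequality $\tau\gamma_k<1$ is likewise fine, since if $\tau\gamma_k=1$ for some $k\le n$ the right-hand side of the conclusion is infinite.

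One caveat worth recording: your induction base requires the hypothesis at index $n=0$, since $G_1\le(1+\tau\gamma_0\sigma_0)G_0$ rests on $a_0\le\sigma_0 G_0=\sigma_0 B$, which is the $n=0$ instance of (\ref{growninequality-discrete}). The lemma as transcribed in the paper asserts the hypothesis only for $n\ge1$, and under that literal reading the conclusion is false because $a_0$ is unconstrained (take $b_k\equiv0$, $\tau=1$, $\gamma_0=\gamma_1=\tfrac12$, $\gamma_k=0$ for $k\ge2$, $a_0=100B$, $a_1=102B$, $a_k=0$ for $k\ge2$: every inequality with $n\ge1$ holds, yet $a_1>e^{2}B$). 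This is a defect of the paper's transcription --- the original lemma in \cite{heywood1990} assumes the inequality for all $n\ge0$ --- not of your argument, which proves the intended version; the same remark applies to your (otherwise correct) treatment of the variant in which the right-hand sum stops at $n-1$ and $\sigma_k$ may be taken equal to $1$.
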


The following regularity assumption also plays an essential role in the stability and convergence analysis.

\textbf{Assumption A1}:
The viscosity function $\nu(c)$ is Lipschitz continuous, and there exist positive constants 
$\lambda, \beta > 0$ such that
\begin{equation}\label{biocnlf-assumption1}
	\kappa \le \nu(x) \le \kappa^{-1}, \qquad 
	|\nu(x_1) - \nu(x_2)| \le \beta |x_1 - x_2|, 
	\quad \forall\, x, x_1, x_2 \in \mathbb{R}.
\end{equation}

\textbf{Assumption A2}
Assume that the exact solutions and the body force satisfy the following regularity conditions:
\begin{align}
	&\f \in L^2(0,T;L^2(\Omega))\\
&\u\in L^{\infty} (0,T;W^{2,4}(\Omega)),\quad \u_t\in L^2(0,T;L^2(\Omega)),\quad\u_{tt}\in L^2(0,T;H^1(\Omega)), \quad	\u_{ttt}\in L^2(0,T;L^2(\Omega)),\\
&c\in L^{\infty} (0,T;H^2(\Omega)),\quad c_t\in L^2(0,T;L^2(\Omega)),\quad c_{tt}\in L^2(0,T;H^1(\Omega)), \quad	c_{ttt}\in L^2(0,T;L^2(\Omega)).
\end{align}

The transport terms present a difficulty since the corresponding discrete formulations do not preserve the skew-symmetry  property as in the continuous case. To overcome this issue, we introduce appropriately defined skew-symmetric trilinear forms, which facilitate the stability analysis and the derivation of error estimates.
\begin{align}
	\begin{split}
		B(\textbf{u},\textbf{v},\textbf{w}) &= \int_\Omega (\u \cdot \nabla \textbf{v}) \cdot\textbf{w} dx + \frac{1}{2} \int_{\Omega} (\nabla \cdot \u) \v \cdot \w dx \\
		&=   \frac{1}{2}\int_\Omega (\u \cdot \nabla \textbf{v}) \cdot\textbf{w} dx -  \frac{1}{2} \int_{\Omega} (\u \cdot \nabla) \w \cdot\v dx  \quad \forall ~\u,\v,\w\in\textbf{V},\\
		b(\u,c,r) &=  \int_\Omega (\u \cdot \nabla c)r dx+ \frac{1}{2} \int_{\Omega} (\nabla \cdot \u) crdx, \\
		&=\frac{1}{2}\int_\Omega (\u \cdot \nabla c) r dx -  \frac{1}{2} \int_{\Omega} (\u \cdot \nabla r) c dx \quad \forall ~\u \in \V, \,  \forall c,r \in \tilde{H}.
	\end{split}
\end{align}
which has the following properties \cite{heyinnian2005,heyinnian2022}
\begin{align}
	& B(\textbf{u},\textbf{v},\textbf{v})=0, \quad b(\u,r,r)=0,\label{biobdf-2}\\
	&B(\textbf{u},\textbf{v},\textbf{w}) = - b(\textbf{u},\textbf{w},\textbf{v}),\quad b(\textbf{u},c,r) = - b(\textbf{u},r,c),\label{biobdf-17}\\
	&B(\textbf{u},\textbf{v},\textbf{w}) \leq C \| \nabla \textbf{u} \|_{L^2} \| \nabla \textbf{v} \| _{L^2} \| \nabla \textbf{w} \|_{L^2},\quad b(\textbf{u},c,r) \leq C \| \nabla \textbf{u} \|_{L^2} \| \nabla c \| _{L^2} \| \nabla r \|_{L^2},\\
	&B(\textbf{u},\textbf{v},\textbf{w})  \leq C \| \textbf{u} \| ^{\frac{1}{2}}_{L^2} \| \nabla \textbf{u}\| ^{\frac{1}{2}}_{L^2} \| \nabla \textbf{v} \| _{L^2} \| \nabla \textbf{w} \|_{L^2}, \quad  b(\textbf{u},c,r)  \leq C \| \textbf{u} \| ^{\frac{1}{2}}_{L^2} \| \nabla \textbf{u}\| ^{\frac{1}{2}}_{L^2} \| \nabla c \| _{L^2} \| \nabla r \|_{L^2}.
\end{align}

If $\nabla \cdot \u=0$, there holds $B(\textbf{u},\textbf{v},\textbf{w}) = (\textbf{u} \cdot \nabla \textbf{v},\textbf{w})= \int_\Omega (\u \cdot \nabla \textbf{v}) \cdot\textbf{w} dx $ and $b(\u,c,r) = (\textbf{u} \cdot \nabla c,r) = \int_\Omega (\u \cdot \nabla c)r dx$.

The following Sobolev embedding inequalities in 2D will be used in the following \cite{reference1}
\begin{align}
	W^{2,4}(\Omega) &\hookrightarrow W^{1,\infty}(\Omega), \label{biocnlf-w24}\\
	H^2(\Omega) &\hookrightarrow W^{1,q}(\Omega), \quad 2 \leq q < \infty, \label{biocnlf-h2w100}\\
	H^2(\Omega) &\hookrightarrow L^{\infty}(\Omega).
\end{align}

The discrete divergence-free velocity space is defined by
\begin{align}\label{divergence-free}
	\mathbf{V}_{0h} := \{ \mathbf{v}_h \in \mathbf{V}_h : (\nabla \cdot \mathbf{v}_h, q_h) = 0, \quad \forall q_h \in M_h \}.
\end{align}

	
	Noticing that (\ref{biobdf-1}) is equivalent to require that $c-\alpha \in L^2_0(\Omega)$, we can adopt the same method in \cite{cao2020,antwolevel2023,antwolevel2024}, introducing an auxiliary concentration $c_\alpha=c-\alpha$ and $\textbf{f}_\alpha=\textbf{f}-g\gamma \alpha \textbf{i}_2$, then we can rewrite the original form (\ref{biobdf-45}) to the following system (still denote $c=c_\alpha, \textbf{f} = \textbf{f}_\alpha$ to simplify notation).
	\begin{align}\label{biobdf-3}
		\left\{\begin{aligned}
			\frac{\partial \u}{\partial t} - \div ( \nu(c+\alpha) \nabla \u) + \u \cdot \nabla \u + \nabla p = - g (1+\gamma c) \textbf{i}_2 + \textbf{f}, \quad  &x \in \Omega, \, t>0,\\
			\nabla \cdot \u = 0, \quad  &x \in \Omega, \, t>0, \\
			\frac{\partial c}{\partial t} - \theta  \Delta c + \u \cdot \nabla c + U \frac{\partial (c+\alpha)}{\partial x_2}=0, \quad  &x \in \Omega, \, t>0,\\
			\frac{1}{|\Omega|} \int_{\Omega} c(x) dx = 0, \quad &x \in \Omega, \, t>0.\\
			\u=0, \,\, \theta \frac{\partial c}{\partial n} - U(c+\alpha)n_2=0, \quad &x \in \partial \Omega, \, t>0.
		\end{aligned}\right.
	\end{align}

 The weak formulation of the bio-convection model (\ref{biobdf-3}) read as follows. We find $(\u,p,c) \in \textbf{V} \times M\times \tilde{H}$ such that  
\begin{align}\label{biobdf-4}
	\left\{\begin{aligned}
		&(\frac{\partial \u}{\partial t} , \v) + (\nu(c+\alpha) \nabla \u,\nabla \v) + B(\u,\u,\v) - ( \div \v,p) + (\div \u,q) = (\f,\v)- g((1+\gamma c)\i_2,\v),\\
		&(\frac{\partial c}{\partial t},\phi) +\theta (\nabla c, \nabla \phi) +b(\u,c,\phi) -U(c,\frac{\partial \phi}{\partial x_2}) = U\alpha (1, \frac{\partial \phi}{\partial x_2}),\\
		&\u(0)=\u_0, \quad c(0)=c_0.
	\end{aligned}\right.
\end{align}
for any $(\v,q,r)\in \V \times M\times \tilde{H}$.

\section{Decoupled Crank--Nicolson Leap-Frog FEM Algorithm and unconditional stability}\label{sec-3}

In this subsection, we present the decoupled Crank--Nicolson Leap--Frog (CNLF) scheme for the bioconvection model and derive its stability properties by means of the energy method.

Let $N$ be a positive integer and $0 = t_0 < t_1< \cdots <t_N=T$ be a uniform partition of $[0,T]$, with $\tau = \Delta t= T/N $. Let $\mathcal{T}_h$ be a family of quasi-uniform triangulations of the domain $\Omega$. 
The ordered elements are denoted by $\mathcal{K}_1, \mathcal{K}_2, \ldots, \mathcal{K}_n$, where $h_i = \operatorname{diam}(\mathcal{K}_i)$ for $i=1,2,\ldots,n$, and we define $h = \max\{h_1,h_2,\ldots,h_n\}$. 
For each $\mathcal{K} \in \mathcal{T}_h$, let $P_r(\mathcal{K})$ denote the space of all polynomials on $\mathcal{K}$ of degree at most $r$. 

We employ the mini element (P1b--P1) to approximate the velocity and pressure, and the piecewise linear (P1) element to approximate the concentration. 
The corresponding finite element spaces are defined as follows \cite{antwolevel2023, scott2008}:
\begin{align*}
	\V_h &= \left\{ \mathbf{v}_h \in C(\overline{\Omega})^2 \cap \mathbf{V} \; \big| \;
	\mathbf{v}_h|_{\mathcal{K}} \in \left( P_1(\mathcal{K}) \oplus b(\mathcal{K}) \right)^2, \ \forall \mathcal{K} \in \mathcal{T}_h \right\},\\
	M_h &= \left\{ q_h \in C(\overline{\Omega}) \cap H^1(\Omega) \; \big| \;
	q_h|_{\mathcal{K}} \in P_1(\mathcal{K}), \ \forall \mathcal{K} \in \mathcal{T}_h, \ 
	\int_{\Omega} q_h \, dx = 0 \right\},\\
	\widetilde{H}_h &= \left\{ r_h \in C(\overline{\Omega}) \cap \widetilde{H} \; \big| \;
	r_h|_{\mathcal{K}} \in P_1(\mathcal{K}), \ \forall \mathcal{K} \in \mathcal{T}_h \right\},
\end{align*}
where $b(\mathcal{K})$ denotes the bubble function associated with each element $\mathcal{K} \in \mathcal{T}_h$.

It is well known that the pair $(\V_h, M_h)$ satisfies the discrete LBB (Ladyzhenskaya--Babuska--Brezzi) condition \cite{babuvska1973, brezzi1974} for the mini element; that is, there exists a constant $\beta>0$, independent of the mesh size $h$, such that
\begin{align*}
	\beta \|q_h \|_{L^2} 
	\leq  \sup_{\mathbf{v}_h \in \V_h}  
	\frac{(\nabla \cdot \mathbf{v}_h,\, q_h)}{\|\nabla \mathbf{v}_h\|_{L^2}}, 
	\quad \forall q_h \in M_h.
\end{align*}

The following inverse inequalities will be used frequently:
\begin{align}
	\|\mathbf{u}_h\|_{W^{m,q}} 
	&\leq C h^{\,l-m + n\left(\frac{1}{q} - \frac{1}{p}\right)} 
	\|\mathbf{u}_h\|_{W^{l,p}}, 
	\quad \forall\, \mathbf{u}_h \in \V_h, \label{biocnlf-inverse1}\\
	\|c_h\|_{W^{m,q}} 
	&\leq C h^{\,l-m + n\left(\frac{1}{q} - \frac{1}{p}\right)} 
	\|c_h\|_{W^{l,p}}, 
	\quad \forall\, c_h \in \widetilde{H}_h. \label{biocnlf-inverse2}
\end{align}

Let the initial approximations be defined as $\mathbf{u}_h^0 = \mathcal{I}_h \mathbf{u}_0$ and $c_h^0 = \Pi_h c_0$, where $\mathcal{I}_h$ and $\Pi_h$ denote the standard interpolation operators onto $\V_h$ and $\widetilde{H}_h$, respectively. 
Then the following estimates hold \cite{scott2008}:
\begin{align}
	\|\mathbf{u}_0 - \mathbf{u}_h^0\|_{L^2} 
	+ \|c_0 - c_h^0\|_{L^2} &\leq C h^2, \label{biobdf-26}\\
	\|\mathbf{u}_0 - \mathbf{u}_h^0\|_{H^1} 
	+ \|c_0 - c_h^0\|_{H^1} &\leq C h, \label{biobdf-27}\\
	\|\mathbf{u}_h^0\|_{L^{\infty}} 
	+ \|c_h^0\|_{L^{\infty}} &\leq C.
\end{align}

Moreover, the following interpolation error estimates hold:
\begin{align}
	\begin{split}
		\|\mathbf{u} - \mathcal{I}_h \mathbf{u}\|_{L^2} 
		+ h \|\mathbf{u} - \mathcal{I}_h \mathbf{u}\|_{H^1} 
		&\leq C h^2 \|\mathbf{u}\|_{H^2},\\
		\|p - J_h p\|_{L^2} 
		&\leq C h \|p\|_{H^1},\\
		\|c - \Pi_h c\|_{L^2} 
		+ h \|c - \Pi_h c\|_{H^1} 
		&\leq C h^2 \|c\|_{H^2},
	\end{split}
\end{align}
where $J_h$ denotes the classical interpolation operator from $M$ onto $M_h$.

We introduce a projection operator with variable coefficients onto finite element spaces, For $0\leq n\leq N-1$, we define the stokes projection operator $(\P_h^{n+1}, \rho_h^{n+1}) : \V \times M_h \rightarrow \V_h \times M_h$ with variable coefficient by \cite{liyuanmhd2024}
\begin{align}
	\nu(c) (\nabla(\u -\P_h^{n+1} \u ), \nabla \v_h   ) +( \nabla \cdot \v_h, p-\rho_h^{n+1} p)=0, \quad \forall \v_h \in \V_h,\label{biocnlf-projection1}\\
	( \nabla \cdot (\u - \P_h^{n+1} \u),q_h   ) =0, \quad \forall q_h \in M_h,
\end{align}
and there holds 
\begin{align}
	\| \u - P_h^{n+1} \u \|_{L^2} + h ( \| \nabla (\u -\P_h^{n+1} \u)\|_{L^2} + \| p -\rho_h^{n+1} p\|_{L^2} \leq C h^2    (   \| \u\|_{H^2}  + \| p\|_{H^1} ).\label{biocnlf-projection-u}
\end{align}

The Ritz projection $R_h^{n+1} : \tilde{H} \rightarrow \tilde{H}_h$ with variable coefficient is defined by 
\begin{align}\label{biocnlf-projection-definition2}
	(   \nabla (c - R^{n+1}_h c) , \nabla r_h  ) =0, \quad \forall q_h \in \tilde{H}_h,
\end{align}
and there holds \cite{scott2008,heyinnian2015,ravindran2016,girault2012}
\begin{align}
	\| c -R^{n+1}_h c\|_{L^2} + h \| \nabla(c-R^{n+1}_h c )\| \leq C h^2 \| c\|_{H^2},\label{biocnlf-projection-c}\\
	\| R^{n+1}_h c \|_{L^{\infty}} + \|R^{n+1}_h c \|_{W^{1,4}} \leq C \| c\|_{H^2}.
\end{align}

Denote
\begin{align*}
	&\u^{n+1} -\u^{n+1}_h= \u^{n+1} - \P^{n+1}_h \u^{n+1} + \P^{n+1}_h \u^{n+1}-\u^{n+1}_h = \eta^{n+1}_\u+\e^{n+1}_{\u},\\
	&	p^{n+1} -p^{n+1}_h= p^{n+1} - \rho^{n+1}_h p^{n+1} + \rho^{n+1}_h p^{n+1}-p^{n+1}_h = \eta^{n+1}_p+e^{n+1}_{p},\\
	&	c^{n+1} -c^{n+1}_h= c^{n+1} - R^{n+1}_h c^{n+1} + R^{n+1}_h c^{n+1}-c^{n+1}_h = \eta^{n+1}_c+e^{n+1}_{c}.
\end{align*}

\textbf{Decoupled Crank--Nicolson Leap-Frog Algorithm:}

 \textbf{Step I:} We find the first step iteration $(\u^1_h,p^1_h)  \in \V_h \times M_h$ by 
\begin{align}\label{biobdf-53}
	&(\frac{\u^1_h-\u^0_h}{\tau},\v_h) + \nu(c^0_h+\alpha) (\nabla \u_h^1 ,\nabla \v_h) 	 + B(\u^0_h,\u^{1}_h , \v_h) -(\nabla \cdot \v_h,p^{1}_h) +(\nabla \cdot \u^{1}_h,q_h)\nonumber\\
	=& -g( (1+\gamma c_h^0)\i_2,\v_h )+(\f^{1},\v_h), \quad \forall ~(\v_h,q_h ) \in \V_h \times M_h.
\end{align}

and find $c_h^1 \in \tilde{H}_h$ by
\begin{align}\label{biobdf-13a}
	(\frac{c^1_h-c^0_h}{\tau},\phi_h) + \theta (\nabla c^{1}_h,\nabla \phi_h ) + b(\u^0_h,c^{1}_h, \phi_h ) - U(c^0_h ,\frac{\partial \phi_h}{\partial x_2}) = U\alpha(1,\frac{\partial \phi_h}{\partial x_2}), \quad \forall ~\phi_h \in \tilde{H}_h.
\end{align}

 \textbf{Step II:} 
Given $\u^n_h,c^n_h $, find $(\u^{n+1}_h,p^{n+1}_h)$, with $n=1,2,\dots,N-1,$ such that 

\begin{align}
(&\frac{\u^{n+1}_h-\u^{n-1}_h}{2\tau},\v_h) + \A(c^n_h,\frac{\u^{n+1}_h+\u^{n-1}_h}{2},\v_h)+B(\u^n_h,\frac{\u^{n+1}_h+\u^{n-1}_h}{2},\v_h)  \label{biocnlf-1}\\
&-(p^n_h,\nabla \cdot \v_h)= - \big( g (1+\gamma c^n_h)i_2,\v_h\big) + (\f^n,\v_h), \quad \forall \v_h \in \V_h,\notag\\
&(\nabla \cdot \frac{\u^{n+1}_h+\u^{n-1}_h}{2},q_h) =0, \quad \forall q_h \in M_h.\label{biocnlf-2}
\end{align}

Given $\c^0_h,c^1_h$, find $c^{n+1}_h \in \tilde{H},$ with $n=1,2\dots,N-1$, such that
\begin{align}
	(&\frac{c^{n+1}_h-c^{n-1}_h}{2 \tau}, \phi_h)+\theta (\nabla \frac{c^{n+1}_h+c^{n-1}_h}{2},\phi_h)+b(\u^n_h,\frac{c^{n+1}_h+c^{n-1}_h}{2},\phi_h)\notag \\
	&-U(c^n_h,\frac{\partial \phi_h}{\partial x_2})=U\alpha(1,\frac{\partial \phi_h}{\partial x_2}),\label{biocnlf-3} \quad \forall \phi_h \in \tilde{H}.
\end{align}

\begin{remark}
This semi-implicit treatment not only leads to a linear system at each time step, 
but also plays a crucial role in maintaining the nearly unconditional stability 
and preserving the convergence order of the corresponding nonlinear implicit schemes.

It should be noted that the CNLF scheme is a three-level time discretization method. 
The initial data $(u_h^0, \theta_h^0)$ are typically taken from the exact solution. 
To obtain the first-step approximation $(u_h^1, \theta_h^1)$, an auxiliary time-stepping scheme can be employed. 
It is important to emphasize that the accuracy of this initial step has a significant impact on the overall convergence behavior of the CNLF method. 
For simplicity, in this work, the first-step values $(u_h^1, \theta_h^1)$ are provided by the back Euler method.  The corresponding stability results and convergent analysis at the first time step have been studied in \cite{lichenyangbio,liyuanbio2025}, 
and $(u_h^1, \theta_h^1)$ can be computed by the Crank--Nicolson linear extrapolation (CNLE) scheme.
\end{remark}

Next we will prove the stability of the decoupled CNLF Algorithm by the following theorem.
\begin{theorem}
	(Unconditional stability) $(\u^{n+1}_h, p^{n+1}_h,c^{n+1}_h)$ are the approximation solution of the system (\ref{biocnlf-1})--(\ref{biocnlf-3}), suppose that the conditions of Assumption \textbf{A1}, we have 
\begin{align}\label{biocnlf-stability-result}
	\| \u&^{n+1}_h\|^2_{L^2} + \| c^{n+1}_h\|^2_{L^2} + \kappa \tau \sum_{n=1}^{N} \|\nabla( \u^{n+1}_h+\u^{n-1}_h)\|^2_{L^2}+\theta \tau \sum_{n=1}^{N} \|\nabla( c^{n+1}_h+c^{n-1}_h)\|^2_{L^2} \\
	\leq & C (\|\u^0_h\|^2_{L^2}+\|\u^1_h\|^2_{L^2}+\|c^0_h\|^2_{L^2}+\|c^1_h\|^2_{L^2}) + C \tau \sum_{n=1}^{N} ( \| \f^n\|^2_{L^2} + |\Omega|) \nonumber, \quad \forall \, 1\leq n \leq N-1.
\end{align}
\end{theorem}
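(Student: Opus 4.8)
The plan is to run the standard energy argument for a three-level scheme, testing each discrete equation against the sum of its two endpoint iterates so that the Crank--Nicolson time difference telescopes cleanly. First I would take $\v_h = \u^{n+1}_h + \u^{n-1}_h$ in (\ref{biocnlf-1}). The time term then produces $\frac{1}{2\tau}(\|\u^{n+1}_h\|^2_{L^2} - \|\u^{n-1}_h\|^2_{L^2})$; the convection term vanishes because $B(\u^n_h,\frac{\u^{n+1}_h+\u^{n-1}_h}{2},\u^{n+1}_h+\u^{n-1}_h)=0$ by (\ref{biobdf-2}); and the pressure contribution $-(p^n_h,\nabla\cdot(\u^{n+1}_h+\u^{n-1}_h))$ is eliminated by choosing $q_h=p^n_h$ in the discrete divergence constraint (\ref{biocnlf-2}). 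Assumption \textbf{A1} supplies the coercivity $\A(c^n_h,\frac{\u^{n+1}_h+\u^{n-1}_h}{2},\u^{n+1}_h+\u^{n-1}_h)\ge \frac{\kappa}{2}\|\nabla(\u^{n+1}_h+\u^{n-1}_h)\|^2_{L^2}$. Testing (\ref{biocnlf-3}) against $\phi_h=c^{n+1}_h+c^{n-1}_h$ is entirely analogous: the trilinear form $b$ vanishes by (\ref{biobdf-2}), and the diffusion term yields the dissipation $\frac{\theta}{2}\|\nabla(c^{n+1}_h+c^{n-1}_h)\|^2_{L^2}$.

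Next I would bound the remaining right-hand side contributions. The buoyancy coupling $-(g\gamma c^n_h\i_2,\u^{n+1}_h+\u^{n-1}_h)$, the forcing $(\f^n,\u^{n+1}_h+\u^{n-1}_h)$, and the constant gravity term are each controlled by Cauchy--Schwarz, the Poincar\'e inequality, and Young's inequality, so that the velocity factor is absorbed into a fraction of the $\kappa$-dissipation, leaving only the low-order residuals $C\|c^n_h\|^2_{L^2}$, $C\|\f^n\|^2_{L^2}$, and $C|\Omega|$. The swimming term $U(c^n_h,\partial_{x_2}(c^{n+1}_h+c^{n-1}_h))$ and the source $U\alpha(1,\partial_{x_2}(c^{n+1}_h+c^{n-1}_h))$ are treated the same way, with the gradient of $c^{n+1}_h+c^{n-1}_h$ absorbed into a fraction of the $\theta$-dissipation, leaving $C\|c^n_h\|^2_{L^2}$ and $C|\Omega|$. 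The crucial structural point is that every explicitly (leap-frog) lagged factor carries no spatial derivative, so each such term is genuinely lower order than the Crank--Nicolson dissipation and can be absorbed into it.

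I would then multiply both estimates by $2\tau$, add them, and sum over $n=1,\dots,m$. The time-difference terms telescope in the usual even/odd pairing to $\|\u^{m+1}_h\|^2_{L^2}+\|\u^m_h\|^2_{L^2}+\|c^{m+1}_h\|^2_{L^2}+\|c^m_h\|^2_{L^2}$ on the left, minus the initial data $\|\u^1_h\|^2_{L^2}+\|\u^0_h\|^2_{L^2}+\|c^1_h\|^2_{L^2}+\|c^0_h\|^2_{L^2}$, which move to the right. Setting $a_n=\|\u^{n+1}_h\|^2_{L^2}+\|\u^n_h\|^2_{L^2}+\|c^{n+1}_h\|^2_{L^2}+\|c^n_h\|^2_{L^2}$ and letting $b_n$ collect the two gradient terms, the residual $C\tau\sum\|c^n_h\|^2_{L^2}$ is dominated by $C\tau\sum a_{n-1}$, while the data terms form the constant $B$. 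The conclusion then follows from the discrete Gronwall inequality (Lemma~\ref{biobdf-11}), with the exponential factor $\exp(CT)$ absorbed into $C$.

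The main obstacle is the claim of \emph{unconditional} stability, i.e. verifying that no coupling between $\tau$ and $h$ is introduced. This rests on the two observations above: the convective trilinear forms cancel exactly through skew-symmetry regardless of $\tau$, and the explicitly lagged buoyancy and swimming terms involve only the undifferentiated iterate $c^n_h$, so they are absorbed into the Crank--Nicolson dissipation by Poincar\'e and Young with constants independent of $h$. To avoid even a mild restriction $\tau\le\tau_0$ in the Gronwall step, I would arrange the residual so that only $a_{n-1}$ (and not $a_n$) appears under the sum; the Remark following Lemma~\ref{biobdf-11} then applies with $\sigma_k=1$, closing the estimate for every $\tau>0$.
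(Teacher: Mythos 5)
Your proposal is correct and follows essentially the same argument as the paper: testing with $\u^{n+1}_h+\u^{n-1}_h$, $p^n_h$, and $c^{n+1}_h+c^{n-1}_h$ (the paper scales these by $2\tau$ and $4\tau$ directly), using skew-symmetry of $B$ and $b$, coercivity from Assumption \textbf{A1}, Cauchy--Schwarz/Poincar\'e/Young on the lagged buoyancy, forcing, and swimming terms, then telescoping and the discrete Gronwall inequality. Your explicit observation that the Gronwall sum involves only lagged indices (so the Remark with $\sigma_k=1$ applies and no restriction on $\tau$ is needed) is a welcome sharpening of a point the paper leaves implicit.
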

\begin{proof}
Taking $\v_h=2 \tau (\u^{n+1}_h+\u^{n-1}_h), q_h=4 \tau  p^n_h,\phi_h= 2\tau (c^{n+1}_h+c^{n-1}_h)$ in $(\ref{biocnlf-1})-(\ref{biocnlf-3})$, we can obtain
\begin{align}
	\big(&\frac{\u^{n+1}_h-\u^{n-1}_h}{2\tau},2 \tau (\u^{n+1}_h+\u^{n-1}_h)\big) + \A\big(c^n_h,\frac{\u^{n+1}_h+\u^{n-1}_h}{2},2 \tau (\u^{n+1}_h+\u^{n-1}_h) \big)\notag\\
	&+B\big(\u^n_h,\frac{\u^{n+1}_h+\u^{n-1}_h}{2},2 \tau (\u^{n+1}_h+\u^{n-1}_h) \big)-\big(p^n_h,2\tau \nabla \cdot (\u^{n+1}_h+\u^{n-1}_h) \big) \\
	=& - \big( g (1+\gamma c^n_h)i_2,2 \tau (\u^{n+1}_h+\u^{n-1}_h) \big) + \big(\f^n,2 \tau (\u^{n+1}_h+\u^{n-1}_h) \big), \quad \forall \v_h \in \V_h,\notag\\
	&(\nabla \cdot \frac{\u^{n+1}_h+\u^{n-1}_h}{2},4 \tau  p^n_h) =0, \quad \forall q_h \in M_h.\\
		(&\frac{c^{n+1}_h-c^{n-1}_h}{2 \tau}, 2\tau (c^{n+1}_h+c^{n-1}_h))+\theta (\nabla \frac{c^{n+1}_h+c^{n-1}_h}{2},2\tau (c^{n+1}_h+c^{n-1}_h))\notag\\
	&	+b(\u^n_h,\frac{c^{n+1}_h+c^{n-1}_h}{2},2\tau (c^{n+1}_h+c^{n-1}_h))-2\tau U(c^n_h,\frac{\partial  (c^{n+1}_h+c^{n-1}_h)}{\partial x_2}) =2\tau U\alpha(1,\frac{\partial  (c^{n+1}_h+c^{n-1}_h)}{\partial x_2}).
\end{align}

Summing up (\ref{biocnlf-1}), (\ref{biocnlf-2}) and (\ref{biocnlf-3}) and noticing that (\ref{biocnlf-assumption1}) and (\ref{biobdf-2}), we have
\begin{align}
	\| \u&^{n+1}_h\|^2_{L^2}+\| c^{n+1}_h\|^2_{L^2} -\| \u^{n-1}_h\|^2_{L^2}-\| c^{n-1}_h\|^2_{L^2}+\kappa \tau \| \nabla(\u^{n+1}_h+\u^{n-1}_h)\|^2_{L^2}+\theta \tau \|\nabla (c^{n+1}_h+c^{n-1}_h)\|^2_{L^2}\\
	=&-2\tau\big(g(1+\gamma c^n_h)i_2,\u^{n+1}_h+\u^{n-1}_h\big)+2\tau (\f^n,\u^{n+1}_h+\u^{n-1}_h)+2\tau U (c^n_h,\frac{\partial c^{n+1}_h+c^{n-1}_h }{\partial x_2})\notag
	\\&+2\tau U \alpha( 1,\frac{\partial c^{n+1}_h+c^{n-1}_h}{\partial x_2})\notag.
\end{align}

To estimate the right-hand side of the above equation, we apply the Cauchy--Schwarz and Young inequalities to each term. This yields the following upper bound, which is essential for the subsequent stability analysis.
\begin{align}
	| -2\tau\big(g(1+\gamma c^n_h)i_2,\u^{n+1}_h+\u^{n-1}_h\big) | &\leq C\|1+c^n_h\|_{L^2}\| \nabla ( \u^{n+1}_h+\u^{n-1}_h)\|_{L^2}\notag\\
	&\leq  \epsilon_1 \tau\| \nabla (\u^{n+1}_h+\u^{n-1}_h)\|^2_{L^2} +C\tau \|c^n_h\|^2_{L^2}+ C\tau |\Omega|,\\
	|  2\tau (\f^n,\u^{n+1}_h+\u^{n-1}_h)| &\leq \epsilon_1 \tau \| \nabla (\u^{n+1}_h+\u^{n-1}_h)\|^2_{L^2} +C\tau \|\f^n\|^2_{L^2},\\
	|2\tau U (c^n_h,\frac{\partial c^{n+1}_h+c^{n-1}_h }{\partial x_2})| &\leq \epsilon_2 \tau \|\nabla (c^{n+1}_h+c^{n-1}_h)\|^2_{L^2}+C \tau \|c^n_h\|^2_{L^2}\\
	|2\tau U \alpha( 1,\frac{\partial c^{n+1}_h+c^{n-1}_h}{\partial x_2})| &\leq \epsilon_2 \tau \|\nabla (c^{n+1}_h+c^{n-1}_h)\|^2_{L^2}+C\tau |\Omega|.
\end{align}

Combining with the above inequalities, summing up from $n=2$ to $N$, applying the discrete Gronwall inequality in Lemma \ref{biobdf-11}, for sufficiently small $\epsilon_1,\epsilon_2$, we can obtain the discrete energy inequalities.
\end{proof}
\section{Convergent analysis}\label{sec-4}
In this section, we concentrate on an error analysis for the presented method for the Bioconvection. Optimal error estimates are obtained. Now, we will present the main theorem in this paper, and the complete analysis will be show in the following.
\begin{theorem}{\label{biocnlf-theorem2}}
	(Convergence of CNLF). Consider the CNLF algorithm (\ref{biocnlf-1})--(\ref{biocnlf-3}). In terms of  the assumption \textbf{A1}  and \textbf{A2}, for $0\leq i \leq N$, Under the time step size restriction $C\tau \leq h$,  there is a positive constant $C$ independent of the mesh size and the time step size such that
	\begin{align}
\max_{0 \leq i \leq N} (\| \mathbf{u}^i - \mathbf{u}_h^i \|_{L^2}^2 + \| c^i - c_h^i \|_{L^2}^2) \leq C (\tau^4 + h^4).
	\end{align}
\end{theorem}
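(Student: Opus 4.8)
The plan is to follow the error-splitting energy method, separating the finite element error from the projection error and then closing the argument through the discrete Gronwall inequality of Lemma~\ref{biobdf-11} together with an inductive boundedness argument for the numerical solution. First I would decompose the errors by means of the variable-coefficient Stokes and Ritz projections already introduced, writing $\u^{n+1}-\u^{n+1}_h=\eta^{n+1}_\u+\e^{n+1}_\u$, $p^{n+1}-p^{n+1}_h=\eta^{n+1}_p+e^{n+1}_p$ and $c^{n+1}-c^{n+1}_h=\eta^{n+1}_c+e^{n+1}_c$. Because the projection parts are already $O(h^2)$ in $L^2$ and $O(h)$ in $H^1$ by (\ref{biocnlf-projection-u}) and (\ref{biocnlf-projection-c}), the task reduces to bounding the finite element parts $\e^{n+1}_\u$ and $e^{n+1}_c$. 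Subtracting the scheme (\ref{biocnlf-1})--(\ref{biocnlf-3}) from the weak formulation (\ref{biobdf-4}) written with the CNLF stencil centred at $t_n$, I would obtain error equations whose right-hand sides gather the temporal truncation errors of the leap-frog difference $\frac{\u^{n+1}-\u^{n-1}}{2\tau}-\u_t(t_n)$ and of the Crank--Nicolson average $\frac{\u^{n+1}+\u^{n-1}}{2}-\u(t_n)$, the projection residuals, and the nonlinear and coupling discrepancies.

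The core of the argument is to test the velocity error equation with $\v_h=\e^{n+1}_\u+\e^{n-1}_\u$ and the concentration error equation with $\phi_h=e^{n+1}_c+e^{n-1}_c$, exactly as in the stability proof. Since both $\P^{n+1}_h\u^{n+1}$ and the sum $\u^{n+1}_h+\u^{n-1}_h$ are discretely divergence free, the test function $\e^{n+1}_\u+\e^{n-1}_\u$ lies in $\V_{0h}$ and the pressure contribution drops out, the exact pressure being carried by the pressure component of the Stokes projection. The leap-frog term then telescopes into $\frac{1}{2\tau}\big(\|\e^{n+1}_\u\|^2_{L^2}-\|\e^{n-1}_\u\|^2_{L^2}\big)$, the Crank--Nicolson viscous term produces the coercive quantity $\kappa\|\nabla(\e^{n+1}_\u+\e^{n-1}_\u)\|^2_{L^2}$ through the lower bound in (\ref{biocnlf-assumption1}), and the skew-symmetry (\ref{biobdf-2}) annihilates the dominant convective contribution $B(\u^n_h,\tfrac{\e^{n+1}_\u+\e^{n-1}_\u}{2},\e^{n+1}_\u+\e^{n-1}_\u)$. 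Using the regularity of Assumption~\textbf{A2}, in particular $\u_{ttt},c_{ttt}\in L^2(0,T;L^2(\Omega))$ and $\u_{tt},c_{tt}\in L^2(0,T;H^1(\Omega))$, I would bound the truncation terms by $C\tau^2$ after summation in $n$. A recurring subtlety of the leap-frog framework is that the coercivity controls only the averaged quantities $\e^{n+1}_\u+\e^{n-1}_\u$ and $e^{n+1}_c+e^{n-1}_c$, not the individual levels, so the convective and coefficient terms must be arranged to feed back precisely into these averaged gradients.

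The principal obstacle, and the reason for the restriction $C\tau\le h$, is the simultaneous treatment of the concentration-dependent viscosity and the semi-implicit convection, which couples the velocity and concentration estimates in both directions. The viscosity mismatch $\A(c^n,\cdot,\cdot)-\A(c^n_h,\cdot,\cdot)$ is controlled by the Lipschitz bound $|\nu(c^n+\alpha)-\nu(c^n_h+\alpha)|\le\beta|\eta^n_c+e^n_c|$; the resulting cross term, estimated with H\"older's and the 2D Ladyzhenskaya inequalities together with the $W^{1,4}$-stability of the Ritz projection, must be absorbed into the coercive diffusion quantities of both equations, which is why the two energy identities are added and treated as one coupled system. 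The convective consistency terms such as $B(\eta^n_\u+\e^n_\u,\cdot,\cdot)$ and $B(\u^n_h,\eta^{n}_\u,\cdot)$ require a uniform $W^{1,\infty}$ bound on $\u^n_h$, which I would establish by induction on $n$: assuming the stated estimate up to level $n$, the inverse inequality (\ref{biocnlf-inverse1}) gives $\|\e^n_\u\|_{W^{1,\infty}}\le Ch^{-2}\|\e^n_\u\|_{L^2}\le Ch^{-2}(\tau^2+h^2)$, which stays uniformly bounded precisely when $C\tau\le h$, so that $\|\u^n_h\|_{W^{1,\infty}}\le\|\u^n\|_{W^{1,\infty}}+\|\eta^n_\u\|_{W^{1,\infty}}+\|\e^n_\u\|_{W^{1,\infty}}\le C$ in view of the embedding (\ref{biocnlf-w24}). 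With the numerical velocity thus controlled, every nonlinear term is dominated by the trilinear estimates and $\epsilon$-Young inequalities, the small parameters chosen so that all gradient terms are reabsorbed into the coercive left-hand side.

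Collecting these bounds and summing the combined identity in $n$ yields, after absorption, an inequality of Gronwall type,
\begin{align*}
&\|\e^{n+1}_\u\|^2_{L^2}+\|e^{n+1}_c\|^2_{L^2}+\kappa\tau\sum_{k}\|\nabla(\e^{k+1}_\u+\e^{k-1}_\u)\|^2_{L^2}+\theta\tau\sum_{k}\|\nabla(e^{k+1}_c+e^{k-1}_c)\|^2_{L^2}\\
&\qquad\le C(\tau^4+h^4)+C\tau\sum_{k}\big(\|\e^k_\u\|^2_{L^2}+\|e^k_c\|^2_{L^2}\big),
\end{align*}
to which Lemma~\ref{biobdf-11} applies and gives $\max_k\big(\|\e^k_\u\|^2_{L^2}+\|e^k_c\|^2_{L^2}\big)\le C(\tau^4+h^4)$. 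The initialisation errors $\e^1_\u,e^1_c$ are of the same order by the backward-Euler (or CNLE) analysis of the first step recalled in the remark, while $\e^0_\u,e^0_c$ are $O(h^2)$ by (\ref{biobdf-26}). Finally, combining the triangle inequality with the projection estimates (\ref{biocnlf-projection-u}) and (\ref{biocnlf-projection-c}) gives $\|\u^i-\u^i_h\|_{L^2}+\|c^i-c^i_h\|_{L^2}\le C(\tau^2+h^2)$, which squared is the claimed estimate and simultaneously closes the induction underlying the $W^{1,\infty}$ control.
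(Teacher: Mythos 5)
Your proposal is correct and follows essentially the same route as the paper's proof: the same splitting into projection errors $\eta$ and finite element errors $\e_\u, e_c$ via the variable-coefficient Stokes and Ritz projections, the same energy testing with $\e^{n+1}_\u+\e^{n-1}_\u$ and $e^{n+1}_c+e^{n-1}_c$, the same inductive use of inverse inequalities under $C\tau\le h$ to absorb the nonlinear and viscosity-coupling terms, and the same closing steps (discrete Gronwall, then triangle inequality with the projection estimates). The only cosmetic difference is that you package the induction as a uniform $W^{1,\infty}$ bound on $\u^n_h$, whereas the paper absorbs the corresponding terms one by one with $L^\infty$ inverse estimates; the mechanism and the role of the condition $C\tau\le h$ are identical.
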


In order to derive the error estimate, we need the following inequalities
\begin{lemma}
	Under the assumption \textbf{A2}, the following inequalities satisfies:
	\begin{align*}
		\tau \sum_{n=1}^{N-1} 
		\left\| \u_t(t_n) - \frac{\u^{n+1} - \u^{n-1}}{2} \right\|_{L^2}^2
		&\le C
		\tau^4,\\[1em]
		\tau \sum_{n=1}^{N-1} 
		\left\| c_t(t_n) - \frac{c^{n+1} - c^{n-1}}{2} \right\|_{L^2}^2
		&\le C
		\tau^4,\\[1em]
		\tau \sum_{n=1}^{N-1} 
		\left\| \nabla \left( \u^n - \frac{\u^{n+1} + \u^{n-1}}{2} \right) \right\|_{L^2}^2
		&\le C
		\tau^4,\\[1em]
		\tau \sum_{n=1}^{N-1} 
		\left\| \nabla \left( c^n - \frac{c^{n+1} + c^{n-1}}{2} \right) \right\|_{L^2}^2
		&\le C
		\tau^4.
	\end{align*}
\end{lemma}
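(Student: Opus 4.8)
The plan is to treat all four bounds as consistency (truncation) estimates for the two elementary difference operators appearing in the scheme: the leap-frog quotient approximating $\u_t(t_n)$ (the first two lines, understood with the $2\tau$ normalization of the time derivative used in (\ref{biocnlf-1})--(\ref{biocnlf-3})) and the Crank--Nicolson average $\frac{\u^{n+1}+\u^{n-1}}{2}$ approximating the level value $\u^n$ (the last two lines). In each case I would expand the exact solution by Taylor's theorem with the \emph{integral} form of the remainder centered at $t_n$, deliberately avoiding the pointwise (Lagrange) remainder: retaining the remainder as a time integral is exactly what allows the final bounds to be controlled by the $L^2(0,T;\cdot)$ norms of the high-order time derivatives provided by Assumption \textbf{A2}, rather than by stronger $L^\infty$-in-time norms that are not assumed.

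For the leap-frog terms I would write, for the exact velocity,
\begin{align*}
\u(t_{n\pm1}) = \u(t_n) \pm \tau\,\u_t(t_n) + \tfrac{\tau^2}{2}\u_{tt}(t_n) + \tfrac12\!\int_{t_n}^{t_{n\pm1}}\!(t_{n\pm1}-s)^2\,\u_{ttt}(s)\,ds,
\end{align*}
so that the zeroth-, first- and second-order terms cancel upon forming $\u^{n+1}-\u^{n-1}$, leaving
\begin{align*}
\u_t(t_n)-\frac{\u^{n+1}-\u^{n-1}}{2\tau}
= -\frac{1}{4\tau}\Big[\int_{t_n}^{t_{n+1}}\!(t_{n+1}-s)^2\u_{ttt}(s)\,ds+\int_{t_{n-1}}^{t_n}\!(s-t_{n-1})^2\u_{ttt}(s)\,ds\Big].
\end{align*}
Since each kernel is $O(\tau^2)$ on an interval of length $\tau$, the Cauchy--Schwarz inequality (in Bochner form) yields the pointwise bound $\big\|\u_t(t_n)-\tfrac{\u^{n+1}-\u^{n-1}}{2\tau}\big\|_{L^2}^2\le C\tau^3\int_{t_{n-1}}^{t_{n+1}}\|\u_{ttt}(s)\|_{L^2}^2\,ds$. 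Multiplying by $\tau$ and summing over $n$, the overlapping windows $[t_{n-1},t_{n+1}]$ cover $[0,T]$ at most twice, so the sum is bounded by $C\tau^4\|\u_{ttt}\|_{L^2(0,T;L^2)}^2\le C\tau^4$ by Assumption \textbf{A2}. The estimate for $c$ is identical, using $c_{ttt}\in L^2(0,T;L^2)$.

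For the Crank--Nicolson terms I would expand only to second order,
\begin{align*}
\u(t_{n\pm1}) = \u(t_n) \pm \tau\,\u_t(t_n) + \int_{t_n}^{t_{n\pm1}}(t_{n\pm1}-s)\,\u_{tt}(s)\,ds,
\end{align*}
so that summing the two expansions cancels the odd-order term and gives
\begin{align*}
\frac{\u^{n+1}+\u^{n-1}}{2}-\u^n = \tfrac12\Big[\int_{t_n}^{t_{n+1}}(t_{n+1}-s)\u_{tt}(s)\,ds+\int_{t_{n-1}}^{t_n}(s-t_{n-1})\u_{tt}(s)\,ds\Big].
\end{align*}
Since $\nabla$ commutes with the time integration, applying the gradient and then Cauchy--Schwarz (now with kernels of size $O(\tau)$) gives $\big\|\nabla(\u^n-\tfrac{\u^{n+1}+\u^{n-1}}{2})\big\|_{L^2}^2\le C\tau^3\int_{t_{n-1}}^{t_{n+1}}\|\nabla\u_{tt}(s)\|_{L^2}^2\,ds$; the same weight-and-sum step then produces $C\tau^4\|\u_{tt}\|_{L^2(0,T;H^1)}^2\le C\tau^4$, using $\u_{tt}\in L^2(0,T;H^1)$, and likewise for $c$ with $c_{tt}\in L^2(0,T;H^1)$. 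I do not anticipate a genuine obstacle, as these are standard truncation estimates; the only point requiring care is the bookkeeping that matches each remainder to \emph{exactly} the regularity listed in \textbf{A2} --- the third time derivative in $L^2(0,T;L^2)$ for the leap-frog quotient and the second time derivative in $L^2(0,T;H^1)$ for the Crank--Nicolson average --- which is precisely why the integral (rather than pointwise) form of the remainder must be retained.
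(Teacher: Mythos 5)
Your proposal is correct and follows essentially the same route as the paper: the paper obtains the same integral-remainder representations (with kernels $(t-t_{n\pm 1})^2$ against $\u_{ttt}$ for the leap-frog quotient and linear kernels against $\nabla\u_{tt}$ for the Crank--Nicolson average) by integrating by parts twice, which is just Taylor's theorem with integral remainder written out explicitly, and then applies the same Cauchy--Schwarz and overlapping-window summation steps to land on exactly the regularity in Assumption \textbf{A2}. Your reading of the statement with the $2\tau$ normalization also matches the paper's intent, so there is nothing to correct.
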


\begin{proof}
	The proof of these inequalities is similiar to \cite{kubacki2013}. By integrating by parts twice and the Cauchy--Schwarz inequality, we have 
	\begin{align*}
		&\sum_{n=1}^{N-1} \left\| \frac{\u_t(t_n) - \u^{n+1} - \u^{n-1}}{2\tau} \right\|_{L^2} \\
		=& \frac{1}{4\tau^2} \int_{\Omega} \sum_{n=1}^{N-1} 
		\left( \int_{t_n}^{t^{n+1}} (t - t^{n+1}) \u_{tt}\, dt 
		+ \int_{t_{n-1}}^{t_n} (t - t^{n-1}) \u_{tt}\, dt \right)^2 dx \\
		=& \frac{1}{4 \tau^2} \int_{\Omega} 
		\sum_{n=1}^{N-1} \left( \int_{t_n}^{t^{n+1}} 
		\frac{(t - t^{n+1})^2}{2} \u_{ttt}\, dt 
		+ \int_{t_{n-1}}^{t_n} \frac{(t - t^{n-1})^2}{2} \u_{ttt}\, dt \right)^2 dx \\
		\le& \frac{1}{2 \tau^2} \int_{\Omega} 
		\sum_{n=1}^{N-1} \frac{ \tau^5}{20} 
		\left( \int_{t_{n-1}}^{t^{n+1}} |\u_{ttt}|^2\, dt \right) dx \\
		\le &\frac{\tau^3}{20} \int_{\Omega} \int_0^T |\u_{ttt}|^2\, dt\, dx\\
		\le& C\tau^3.
	\end{align*}
	
	similarily
	\begin{align*}
		&\sum_{n=1}^{N-1} \left\| \nabla \left( \u^n - \frac{\u^{n+1} + \u^{n-1}}{2} \right) \right\|^2_{L^2} \\
		=& \int_{\Omega} \sum_{n=1}^{N-1} \left| \nabla \left( \frac{\u^n - \u^{n+1}}{2} + \frac{\u^n - \u^{n-1}}{2} \right) \right|^2 dx \\
		= &\frac{1}{4} \int_{\Omega} \sum_{n=1}^{N-1} \left| \left( \nabla \u^n - \nabla \u^{n+1} \right) + \left( \nabla \u^n - \nabla \u^{n-1} \right) \right|^2 dx \\
		=& \frac{1}{4} \int_{\Omega} \sum_{n=1}^{N-1} \left| \int_{t_{n-1}}^{t^n} \nabla \u_t \, dt + \int_{t_n}^{t^n} \nabla \u_t \, dt \right|^2 dx \\
		=& \frac{1}{4} \int_{\Omega} \sum_{n=1}^{N-1} \left| \int_{t_{n-1}}^{t^n} (t - t^n)' \nabla \u_t \, dt + \int_{t_{n-1}}^{t^n} (t - t^k)' \nabla \u_t \, dt \right|^2 dx \\
		=& \frac{1}{4} \int_{\Omega} \sum_{n=1}^{N-1} \left| -\tau \int_{t_{n-1}}^{t^{n+1}} \nabla \u_t \, dt + \int_{t_n}^{t^{n+1}} (t - t^n) \nabla \u_t \, dt + \int_{t_{n-1}}^{t^n} (t^n - t) \nabla \u_t \, dt \right|^2 dx \\
		\leq& \frac{1}{2} \int_{\Omega} \sum_{n=1}^{N-1} \left( \tau^2 \left| \int_{t_{n-1}}^{t^{n+1}} \nabla \u_t \, dt \right|^2 + \left| \int_{t_n}^{t^{n+1}} (t - t^n) \nabla \u_t \, dt \right|^2 + \left| \int_{t_{n-1}}^{t^n} (t^n - t) \nabla \u_t \, dt \right|^2 \right) dx\\
		\leq& \frac{1}{2} \int_{\Omega} \sum_{n=1}^{N-1} \left( \tau^3 \int_{t_{n-1}}^{t^{n+1}} |\nabla \u_{tt}|^2 \, dt + \frac{\tau^3}{3} \int_{t_{n-1}}^{t^{n+1}} |\nabla \u_{tt}|^2 \, dt \right) dx \\
		\leq&C \tau^3.
	\end{align*}
	
\end{proof}

\begin{theorem}\label{biocnlf-theorem1}
	Based on the Assumption A1 and A2, let $(\u^{i}, p^{i},c^{i})$  and $(\u_h^{i}, p_h^{i},c_h^{i})$ be the solutions of the continuous model (\ref{biobdf-4}) and the finite element discrete scheme (\ref{biocnlf-1}) -- (\ref{biocnlf-3}) , respectively. Under the time step size restriction $C\tau \leq h$, there exists some $C_1>0$ independent of $\tau$ and $h$ such that $ \forall~0 \leq m \leq N-1$
	\begin{align}\label{biobdf-55}
		\| \e^{m+1}_{\u} \|^2_{L^2} +\| e^{m+1}_{c} \|^2_{L^2} +\tau k^{-1} \sum_{i=0}^{m}  \| \nabla \e^{i+1}_{\u} \|^2_{L^2} +\tau \theta  \sum_{i=0}^{m} \| \nabla e^{i+1}_{c} \|^2_{L^2} \leq C_1 (\tau^4+h^4).
	\end{align}
\end{theorem}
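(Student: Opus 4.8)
The plan is to prove \eqref{biobdf-55} by the energy method, with the optimal $O(\tau^4+h^4)$ order secured through a mathematical-induction (boundedness) argument that uses the restriction $C\tau\le h$ to control the discrete velocity in $L^\infty$. First I would set up the error equations: evaluating the weak form \eqref{biobdf-4} at $t=t_n$, testing against discretely divergence-free $\v_h\in\V_h$ and $\phi_h\in\tilde{H}_h$, and subtracting the scheme \eqref{biocnlf-1}--\eqref{biocnlf-3}, I split each error through the projections as $\u^{n+1}-\u^{n+1}_h=\eta^{n+1}_{\u}+\e^{n+1}_{\u}$ and $c^{n+1}-c^{n+1}_h=\eta^{n+1}_c+e^{n+1}_c$. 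By construction the defining identities \eqref{biocnlf-projection1} and \eqref{biocnlf-projection-definition2} annihilate the principal elliptic (and pressure) contributions of $\eta_{\u}$ and $\eta_c$, so the equations for $(\e^{n+1}_{\u},e^{n+1}_c)$ are driven only by: the temporal consistency and averaging defects $\u_t(t_n)-\tfrac{\u^{n+1}-\u^{n-1}}{2\tau}$ and $\u^n-\tfrac{\u^{n+1}+\u^{n-1}}{2}$ (and their concentration analogues); the time differences of $\eta$; the viscosity mismatch $\A(c^n,\cdot,\cdot)-\A(c^n_h,\cdot,\cdot)$; and the convection mismatches $B(\u^n,\u^n,\cdot)-B(\u^n_h,\tfrac{\u^{n+1}_h+\u^{n-1}_h}{2},\cdot)$ together with its scalar counterpart $b$.

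Next I would test the velocity equation with $\v_h=2\tau(\e^{n+1}_{\u}+\e^{n-1}_{\u})$ and the concentration equation with $\phi_h=2\tau(e^{n+1}_c+e^{n-1}_c)$. The leap-frog structure yields the telescoping identity $(\tfrac{\e^{n+1}_{\u}-\e^{n-1}_{\u}}{2\tau},2\tau(\e^{n+1}_{\u}+\e^{n-1}_{\u}))=\|\e^{n+1}_{\u}\|^2_{L^2}-\|\e^{n-1}_{\u}\|^2_{L^2}$ (and likewise for $c$), which upon summing collapses to the endpoint norms $\|\e^{m+1}_{\u}\|^2_{L^2}+\|\e^m_{\u}\|^2_{L^2}$, while Assumption A1 furnishes the dissipation $\kappa\tau\|\nabla(\e^{n+1}_{\u}+\e^{n-1}_{\u})\|^2_{L^2}$ and $\theta\tau\|\nabla(e^{n+1}_c+e^{n-1}_c)\|^2_{L^2}$ on the left. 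The consistency and averaging defects are controlled by the preceding Lemma (contributing $C\tau^4$) and the projection terms by \eqref{biocnlf-projection-u}--\eqref{biocnlf-projection-c} (contributing $Ch^4$), any gradient factor being absorbed into the dissipation via Young's inequality.

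The principal obstacle is the nonlinear and variable-viscosity terms, where a crude bound destroys optimality. The essential gain comes from the skew-symmetry \eqref{biobdf-2}. Decomposing the convection mismatch as $B(\eta^n_{\u}+\e^n_{\u},\u^n,\cdot)+B\big(\u^n_h,\u^n-\tfrac{\u^{n+1}_h+\u^{n-1}_h}{2},\cdot\big)$ and expanding the second factor as the averaging defect plus $\bar\eta_{\u}:=\tfrac{\eta^{n+1}_{\u}+\eta^{n-1}_{\u}}{2}$ plus $\bar\e_{\u}:=\tfrac{\e^{n+1}_{\u}+\e^{n-1}_{\u}}{2}$, the pure finite-element-error piece $B\big(\u^n_h,\bar\e_{\u},2\tau(\e^{n+1}_{\u}+\e^{n-1}_{\u})\big)=4\tau\,B(\u^n_h,\bar\e_{\u},\bar\e_{\u})$ vanishes. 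For the surviving term $B(\u^n_h,\bar\eta_{\u},\cdot)$ I would integrate by parts so that the spatial derivative falls on the test function, whose gradient is absorbed by the viscous dissipation, leaving the projection error only through $\|\bar\eta_{\u}\|_{L^2}=O(h^2)$ and hence an optimal $O(h^4)$ contribution; the cross term $B(\e^n_{\u},\u^n,\cdot)$ is handled using the $W^{1,\infty}$-regularity of the exact velocity (via $\u\in L^\infty(0,T;W^{2,4})$ and \eqref{biocnlf-w24}), producing only $C\tau\|\e^n_{\u}\|^2_{L^2}$-type Gronwall contributions, while $B(\eta^n_{\u},\u^n,\cdot)$ again yields $O(h^4)$; the concentration transport $b$ is treated identically. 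The viscosity mismatch $\A(c^n,\cdot,\v_h)-\A(c^n_h,\cdot,\v_h)=\big((\nu(c^n+\alpha)-\nu(c^n_h+\alpha))\nabla(\cdot),\nabla\v_h\big)$ is controlled by the Lipschitz bound $|\nu(c^n+\alpha)-\nu(c^n_h+\alpha)|\le\beta|\eta^n_c+e^n_c|$ from \eqref{biocnlf-assumption1} with the embeddings \eqref{biocnlf-w24}--\eqref{biocnlf-h2w100}, so that $\eta^n_c$ contributes $O(h^2)$ while $e^n_c$ produces a term carried by the combined velocity-concentration Gronwall argument.

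Finally, the nonlinear estimates presuppose $\|\u^n_h\|_{L^\infty}\le C$, and this is where $C\tau\le h$ enters. Assuming \eqref{biobdf-55} holds up to level $m$, the inverse inequality \eqref{biocnlf-inverse1} gives $\|\e^m_{\u}\|_{L^\infty}\le Ch^{-1}\|\e^m_{\u}\|_{L^2}\le Ch^{-1}(\tau^2+h^2)\le C$ (using $\tau\le C^{-1}h$), so that $\|\u^m_h\|_{L^\infty}$ is bounded uniformly through the exact solution and the projection error, justifying the nonlinear estimates at level $m+1$. Summing the energy inequality from $n=1$ to $m$, noting that the backward-Euler first step \eqref{biobdf-53}--\eqref{biobdf-13a} carries an $O(\tau^4+h^4)$ error, and checking $\tau\gamma_k\le 1$ for small $\tau$, I would invoke the discrete Gronwall inequality of Lemma \ref{biobdf-11} to absorb the velocity-concentration coupling and conclude \eqref{biobdf-55}; this simultaneously verifies the induction hypothesis at level $m+1$. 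The hard part throughout is preserving the optimal $O(h^4)$ order in the convective and viscous couplings without a restrictive step-size condition, which rests on the skew-symmetric cancellation and on integrating by parts to move every derivative onto the dissipation-controlled test function.
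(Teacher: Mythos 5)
Your proposal is correct and follows essentially the same route as the paper: the variable-coefficient Stokes/Ritz projection splitting, testing the CNLF error equations with $2\tau(\e^{n+1}_{\u}+\e^{n-1}_{\u})$ and $2\tau(e^{n+1}_{c}+e^{n-1}_{c})$ so that the leap-frog structure telescopes, the temporal-consistency lemma supplying the $O(\tau^4)$ contributions and the projection estimates (\ref{biocnlf-projection-u}), (\ref{biocnlf-projection-c}) supplying the $O(h^4)$ ones, mathematical induction combined with inverse inequalities under $C\tau\le h$ to control the $L^\infty$ quantities in the nonlinear and variable-viscosity terms, an $O(\tau^2+h^2)$ first-step error from the backward-Euler initialization, and the discrete Gronwall inequality of Lemma \ref{biobdf-11} to close.

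The one genuine difference is local, in the convective mismatch. You center the decomposition on $\u^n_h$, so the purely discrete-error piece becomes $4\tau B\big(\u^n_h,\tfrac{1}{2}(\e^{n+1}_{\u}+\e^{n-1}_{\u}),\tfrac{1}{2}(\e^{n+1}_{\u}+\e^{n-1}_{\u})\big)$, which vanishes identically by the skew-symmetry (\ref{biobdf-2}). The paper centers instead on $\P^{n+1}_h\u^n$, which splits this same quantity into the two terms $Y_8$ and $Y_9$ of (\ref{biocnlf-Btrin}) and estimates each separately, invoking the induction hypothesis, the inverse inequality (\ref{biocnlf-inverse1}) and an $L^\infty$ bound on $\P^{n+1}_h\u^n$ --- even though $Y_8+Y_9$ is exactly your vanishing term. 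Your cancellation is the cleaner accounting: it removes the worst quadratic term at no cost, avoids the cross terms $Y_6$, $Y_7$ pairing $\e^n_{\u}$ with $\eta$-quantities altogether (in your splitting $\e^n_{\u}$ meets only the smooth solution $\u^n$, giving a plain Gronwall term), and confines the use of the induction hypothesis and of $C\tau\le h$ to the viscosity mismatch (the paper's $X_4$) and to the uniform bound on $\|\u^n_h\|_{L^\infty}$ needed for the terms carrying $\tfrac{1}{2}(\eta^{n+1}_{\u}+\eta^{n-1}_{\u})$ and the averaging defect. Both routes deliver the same $O(\tau^4+h^4)$ bound under the same hypotheses.
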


\begin{proof}
	The main idea to proof Theorem \ref{biocnlf-theorem2} is the mathematical induction, so we need to assume that (\ref{biobdf-55}) is valid for $ i = n $, i.e.
	\begin{align}
	\| e_{\mathbf{u}}^n \|_{L^2}^2 + \| e_c^n \|_{L^2}^2 \leq C(\tau^2 + h^4).
	\end{align}

By taking $C\tau \leq h$, we can see that
	\begin{align}
	\| e_{\mathbf{u}}^n \|_{L^2}^2 + \| e_c^n \|_{L^2}^2 \leq C h^4.
\end{align}
	We will prove (\ref{biobdf-55}) is valid for \( i = n + 1 \). First, we can derive the following variational formulation at the time $t=t_n$ from the continue model (\ref{biobdf-4}).
\begin{align}
		(&\frac{\partial \u (t_n)}{\partial t} , \v_h) + A(c^n,\u^n,\v_h) + B(\u^n,\u^n,\v_h) - ( \div \v,p^n_h) + (\div \u^n,q_h) \label{biocnlf-continue1}\\
	&=(\f^n,\v_h)- g((1+\gamma c^n)\i_2,\v_h),\quad \forall \, (\v_h, q_h) \in \V_h \times M_h,\notag\\
		(&\frac{\partial c(t_n)}{\partial t},\phi_h) +b(\u^n,c^n,\phi_h) -U(c^n,\frac{\partial \phi_h}{\partial x_2}) = U\alpha (1, \frac{\partial \phi_h}{\partial x_2}), \quad \forall \, \phi_h\in \tilde{H}.\label{biocnlf-continue2}
\end{align}

Substracting (\ref{biocnlf-1})--(\ref{biocnlf-3}) from (\ref{biocnlf-continue1})--(\ref{biocnlf-continue2}), and noticing that projection operator (\ref{biocnlf-projection1}), (\ref{biocnlf-projection-definition2}) and the divergence-free space (\ref{divergence-free}), we have the following error equation

\begin{align}\label{biocnlf-errorequation1}
	&\left( \frac{\e_{\u}^{n+1} - \e_{\u}^{n-1}}{2\tau},\v_h \right) + \left( \frac{e_c^{n+1} - e_c^{n-1}}{2\tau}, \phi_h \right) \notag\\
	&+A(c^n,\frac{\e^{n+1}_\u+\e^{n-1}_\u}{2},\v_h) + \theta \left(  \nabla \frac{e^{n+1}_c + e^{n-1}_c}{2}, \nabla \phi_h \right) \notag\\
	&\leq \left( \frac{\P^{n+1}_h \u^{n+1} - \P^{n+1}_h \u^{n-1}}{2\tau} - \u_t(t_n), \v_h \right) + \left( \frac{ R^{n+1}_h c^{n+1}-R^{n+1}_h c^{n-1}}{2\tau} - c_t(t_n), \phi_h
		 \right) \\
		 &-A(c^n,\u^n-\frac{\u^{n+1}+\u^{n-1}}{2},\v_h)\notag+A\big((c^n-c^n_h),\frac{\e_\u^{n+1}+\e^{n-1}_\u}{2},\v_h\big)\\
	&-A\big((c^n-c^n_h,\frac{\P^{n+1}\u^{n+1} +\P^{n+1}\u^{n-1} }{2},\v_h)\big)
	+ \theta \left(\nabla( \frac{R^{n+1}_h c^{n+1}+R^{n+1}_h c^{n-1}}{2} - c^n), \nabla \phi_h \right) \notag\\
	&+ \left| B \left( \u^n, \u^n, \v_h \right) - B \left( \u_h^n, \frac{\u_h^{n+1} + \u_h^{n-1}}{2}, \v_h \right) \right| + \left| b \left( \u^n, c^n, \phi_h \right) - b \left( \u_h^n, \frac{c_h^{n+1} + c_h^{n-1}}{2}, \phi_h \right) \right| \notag\\
	&+\left| \big( g (1+\gamma c^n)i_2,\v_h\big) -\big( g (1+\gamma c^n_h)i_2,\v_h\big)  \right|+\left| U(c^n,\frac{\partial \phi_h}{\partial x_2}) -U(c^n_h,\frac{\partial \phi_h}{\partial x_2})\right|.\notag
\end{align}
Setting $(\v_h,\phi_h)=\big(2\tau(\e^{n+1}_\u+\e^{n-1}_\u) ,2\tau(e^{n+1}_c+e^{n-1}_c)\big)$ in (\ref{biocnlf-errorequation1}) and noticing that (\ref{biocnlf-assumption1}), we can get
\begin{align}\label{biocnlf-errorequation2}
	\| \e&^{n+1}_\u\|^2_{L^2}+\| e^{n+1}_c\|^2_{L^2}-\| \e^{n-1}_\u\|^2_{L^2}-\| e^{n-1}_c\|^2_{L^2}\\
	&+ \kappa \tau \| \nabla (\e^{n+1}_\u+\e^{n+1}_\u)\|^2_{L^2}+ \theta\tau \|   \nabla (e^{n+1}_c + e^{n-1} _c)\|^2_{L^2} \notag\\
	\leq &2\tau \left( \frac{\P^{n+1}_h \u^{n+1} - \P^{n+1}_h \u^{n-1}}{2\tau} - \u_t(t_n), \e^{n+1}_\u+\e^{n-1}_\u \right)\notag\\
	&+ 2\tau \left( \frac{ R^{n+1}_h c^{n+1}-R^{n+1}_h c^{n-1}}{2\tau} - c_t(t_n), e^{n+1}_c+e^{n-1}_c
	\right) \notag\\
	&+2\tau \big| A(c^n,\u^n-\frac{\u^{n+1}+\u^{n-1}}{2},\e^{n+1}_\u+\e^{n-1}_\u)\notag\big|\notag\\
	&+2\tau \big| A\big((c^n-c^n_h),\frac{\e_\u^{n+1}+\e^{n-1}_\u}{2},\e^{n+1}_\u+\e^{n-1}_\u\big)\big|\notag\\
	&+2\tau \big| A\big((c^n-c^n_h,\frac{\P^{n+1}\u^{n+1} +\P^{n+1}\u^{n-1} }{2},\e^{n+1}_\u+\e^{n-1}_\u)\big)\big|
\notag\\
	&+2 \theta \tau \left( \nabla( \frac{R^{n+1}_h c^{n+1}+R^{n+1}_h c^{n-1}}{2} - c^n), \nabla (e^{n+1}_c+e^{n-1}_c) \right) \notag\\
	&+2\tau \left|  B \left( \u^n, \u^n, \e^{n+1}_\u+\e^{n-1}_\u \right) -  B \left( \u_h^n, \frac{\u_h^{n+1} + \u_h^{n-1}}{2}, \e^{n+1}_\u+\e^{n-1}_\u \right) \right| \notag\\
	&+2\tau  \left| b \left( \u^n, c^n, e^{n+1}_c+e^{n-1}_c \right) - b \left( \u_h^n, \frac{c_h^{n+1} + c_h^{n-1}}{2}, e^{n+1}_c+e^{n-1}_c \right) \right| \notag\\
	&+2\tau \left| \big( g (1+\gamma c^n)i_2,\e^{n+1}_\u+\e^{n-1}_\u\big) -\big( g (1+\gamma c^n_h)i_2,\e^{n+1}_\u+\e^{n-1}_\u \big)  \right|\notag\\
	&+2\tau\left| U(c^n,\frac{\partial e^{n+1}_c+e^{n-1}_c }{\partial x_2}) -U(c^n_h,\frac{\partial e^{n+1}_c+e^{n-1}_c }{\partial x_2})\right|\notag\\
	=&\sum_{i=1}^{10}X_i.\notag
\end{align}

Next, we need to estimate every term on the right hand side.
By the H\"{o}lder inequality, Young inequality, we can have 
\begin{align}
	X_1=
&2\tau \left( \frac{\P^{n+1}_h \u^{n+1} - \P^{n+1}_h \u^{n-1}}{2\tau} - \u_t(t_n), \e^{n+1}_\u+\e^{n-1}_\u \right)\notag \\
=& 2\tau \left. \left( \frac{\u^{n+1} - \u^{n-1}}{2\tau} - \u_t(t_n), \e^{n+1}_\u + \e^{n-1}_\u \right) -2\tau \left( \frac{\eta_\u^{n+1} - \eta_\u^{n-1}}{2\tau}, \e^{n+1}_\u + \e^{n-1}_\u \right) \right. \\
	\leq & \epsilon_3 \tau \left\|\nabla  (\e^{n+1}_\u + \e^{n-1}_\u )\right\|_{L^2}^2 + C\tau \left\| \frac{\u^{n+1} - \u^{n-1}}{2\tau} - \u_t(t_n) \right\|_0^2 + C\tau \|\frac{\eta_\u^{n+1} - \eta_\u^{n-1}}{2\tau}\|^2_{L^2}\notag \\
	\leq&\epsilon_3 \tau \left\|\nabla  (\e^{n+1}_\u + \e^{n-1}_\u )\right\|_{L^2}^2 + C \tau\left\| \frac{\u^{n+1} - \u^{n-1}}{2\tau} - \u_t(t_n) \right\|_{L^2}^2  + C\int_{\Omega} \int_{t^{n-1}}^{t^{n+1}} |\eta_{\u t}|^2 dt dx \notag.
\end{align}

Similarly
\begin{align}
	X_2=
	&2\tau \left( \frac{R^{n+1}_h c^{n+1} - R^{n+1}_h c^{n-1}}{2\tau} - c_t(t_n), e^{n+1}_c+e^{n-1}_c \right)\notag \\
	=& 2\tau \left. \left( \frac{c^{n+1} - c^{n-1}}{2\tau} - c_t(t_n), e^{n+1}_c + e^{n-1}_c \right) -2\tau \left( \frac{\eta_c^{n+1} - \eta_c^{n-1}}{2\tau}, e^{n+1}_c + e^{n-1}_c \right) \right. \\
	\leq & \epsilon_3 \tau \left\|\nabla  (e^{n+1}_c + e^{n-1}_c )\right\|_{L^2}^2 + C\tau \left\| \frac{c^{n+1} - c^{n-1}}{2\tau} - c_t(t_n) \right\|_0^2 + C\tau \|\frac{\eta_c^{n+1} - \eta_c^{n-1}}{2\tau}\|^2_{L^2}\notag \\
	\leq&\epsilon_3 \tau \left\|\nabla  (e^{n+1}_c + e^{n-1}_c )\right\|_{L^2}^2 + C \tau\left\| \frac{c^{n+1} - c^{n-1}}{2\tau} - c_t(t_n) \right\|_{L^2}^2  + C\int_{\Omega} \int_{t^{n-1}}^{t^{n+1}} |\eta_{c t}|^2 dt dx \notag.
\end{align}

In terms of (\ref{biocnlf-h2w100}) and assumption \textbf{A2},  one has 
\begin{align}
	X_3 \leq& C \tau \| c^n\|_{L^{\infty}}  \| \nabla (\u^n - \frac{\u^{n+1} + \u^{n-1})}{2}\|_{L^2}  \| \nabla(\e^{n+1}_\u+\e^{n-1}_\u)\|_{L^2}\\
	\leq & C \tau  \| \nabla (\u^n - \frac{\u^{n+1} + \u^{n-1})}{2}\|^2_{L^2}+ \epsilon_3 \tau \| \nabla(\e^{n+1}_\u+\e^{n-1}_\u)\|^2_{L^2}.\notag
\end{align}

By the mathematical induction and inverse inequalities (\ref{biocnlf-inverse2}), for sufficiently small $h$ such that $Ch\leq \frac{\kappa}{2}$, we get
\begin{align}
X_4\leq&C\tau \| \eta^n_c+e^n_c\|_{L^{\infty}}\| \nabla(\e^{n+1}_\u+\e^{n-1}_\u)\|_{L^2}\| \nabla(\e^{n+1}_\u+\e^{n-1}_\u)\|_{L^2}\\
\leq &C\tau h^{-1} \| \eta^n_c+e^n_c\|_{L^{2}}\|\nabla(\e^{n+1}_\u+\e^{n-1}_\u)\|_{L^2}^2\notag\\
\leq &
C\tau h \|\nabla(\e^{n+1}_\u+\e^{n-1}_\u)\|_{L^2}^2\notag\\
\leq& \frac{\kappa}{2}\tau\|\nabla(\e^{n+1}_\u+\e^{n-1}_\u)\|_{L^2}^2.\notag
\end{align}

According to (\ref{biocnlf-w24}), one has
\begin{align}
	X_5\leq &C \tau \| \eta^n_c+e^n_c\|_{L^{2}}  \| \nabla \frac{\P^{n+1}\u^{n+1} +\P^{n+1}\u^{n-1} }{2}\|_{L^\infty}\|\nabla(\e^{n+1}_\u+\e^{n-1}_\u)\|_{L^2}\\
	\leq& C \tau\| \eta^n_c+e^n_c\|_{L^{2}} ( \| \u^{n+1}\|_{W^{2,4}}+\| \u^{n-1}\|_{W^{2,4}}) |\nabla(\e^{n+1}_\u+\e^{n-1}_\u)\|_{L^2}\notag\\
	\leq& C\tau h^4 + C \tau \|e^n_c\|^2_{L^2}+  \epsilon_3 \tau \| \nabla(\e^{n+1}_\u+\e^{n-1}_\u)\|^2_{L^2}.\notag
\end{align}

By using the projection definition (\ref{biocnlf-projection-definition2}), the H\"{o}lder inequality and Young inequality, on has
\begin{align}
	X_6=
&2 \theta \tau \left( \nabla( \frac{R^{n+1}_h c^{n+1}+R^{n+1}_h c^{n-1}}{2} - c^n), \nabla (e^{n+1}_c+e^{n-1}_c) \right)\notag\\
=&2 \theta \tau \left( \nabla( \frac{ \eta_c^{n+1}+ \eta_c^{n-1}}{2}), \nabla (e^{n+1}_c+e^{n-1}_c) \right)+2 \theta \tau \left( \nabla( \frac{ c^{n+1}+ c^{n-1}}{2} -c^n), \nabla (e^{n+1}_c+e^{n-1}_c) \right)\notag\\
\leq & \epsilon_4\tau \| \nabla (e^{n+1}_c+e^{n-1}_c)\|^2_{L^2}+ C \tau\| \nabla ( \frac{ c^{n+1}+ c^{n-1}}{2} -c^n) \|^2_{L^2}.
\end{align}

By recombination, we can get
\begin{align}\label{biocnlf-Btrin}
	X_7=
&2\tau \left|  B \left( \u^n, \u^n, \e^{n+1}_\u+\e^{n-1}_\u \right) -  B \left( \u_h^n, \frac{\u_h^{n+1} + \u_h^{n-1}}{2}, \e^{n+1}_\u+\e^{n-1}_\u \right) \right|  \\
	= &2\tau B\left( \u^n - \frac{\u^{n+1} + \u^{n-1}}{2}, \u^n, \e^{n+1}_\u+\e^{n-1}_\u \right) +2\tau B \left( \frac{\eta_\u^{n+1}+\eta_\u^{n-1}}{2}, \u^n, \e^{n+1}_\u + \e^{n-1}_\u \right) \notag\\
	& + 2\tau B \left( \frac{ \P^{n+1}_h \u^{n+1}+\P^{n+1}_h \u^{n-1}}{2}, \eta^n_\u, \e^{n+1}_\u + \e^{n-1}_\u \right)\notag \\
	&+ 2\tau B \left( \frac{\P^{n+1}_h \u^{n+1}+\P^{n+1}_h \u^{n-1}}{2}, \P^{n+1}_h \u^n -  \frac{ \P^{n+1}_h \u^{n+1}+\P^{n+1}_h \u^{n-1}}{2}, \e^{n+1}_\u + \e^{n-1}_\u \right) \notag\\
	& + 2\tau B\left( \frac{\P^{n+1}_h \u^{n+1}+\P^{n+1}_h \u^{n-1}}{2} - \P^{n+1}_h \u^n, \frac{\P^{n+1}_h \u^{n+1}+\P^{n+1}_h \u^{n-1}}{2}, \e^{n+1}_\u + \e^{n-1}_\u  \right)\notag \\&+ 2\tau B \left( \e^n_\u, \frac{\u^{n+1} + \u^{n-1}}{2}, \e^{n+1}_\u + \e^{n-1}_\u\right) - 2\tau B \left( \e^n_\u, \frac{\eta^{n+1}_\u + \eta_\u^{n-1}}{2}, \e^{n+1}_\u + \e^{n-1}_\u\right)\notag \\
	&-2\tau B \left( \e^n_\u, \frac{\e^{n+1}_\u + \e^{n-1}_\u}{2}, \e^{n+1}_\u + \e^{n-1}_\u\right)+2\tau B \left( \P^{n+1}_h\u^n, \frac{\e^{n+1}_\u + \e^{n-1}_\u}{2}, \e^{n+1}_\u + \e^{n-1}_\u\right)\notag\\
	=&\sum_{i=1}^{9}Y_i.\notag
\end{align}

By the H\"{o}lder inequality, Young inequality, we can estimate the right hand side of (\ref{biocnlf-Btrin}),
\begin{align}
	|Y_1| \leq&C \tau  \|\u^n - \frac{\u^{n+1} + \u^{n-1}}{2}\|_{L^3}  \| \nabla \u^n\|_{L^2} \| \e^{n+1}_\u+\e^{n-1}_\u\|_{L^6}\\
	\leq & C \tau \| \nabla (\u^n - \frac{\u^{n+1} + \u^{n-1})}{2}\|^2_{L^2}+ \epsilon_3 \tau \| \nabla(\e^{n+1}_\u+\e^{n-1}_\u)\|^2_{L^2},\notag\\
		|Y_2| \leq& C\tau \| \frac{\eta_\u^{n+1}+\eta_\u^{n-1}}{2}\|_{L^2} \| \nabla \u^n\|_{L^{\infty}}\|\e^{n+1}_\u + \e^{n-1}_\u \|_{L^2}\\
		\leq & C\tau h^4 +\epsilon_3 \tau \| \nabla(\e^{n+1}_\u+\e^{n-1}_\u)\|^2_{L^2}.\notag
 \end{align}
 
In terms of (\ref{biobdf-17}) and assumption \textbf{A2}, we have
\begin{align}
	|Y_3| =& \left| -2\tau B \left( \frac{ \P^{n+1}_h \u^{n+1}+\P^{n+1}_h \u^{n-1}}{2},  \e^{n+1}_\u + \e^{n-1}_\u ,\eta^n_\u\right) \right|\\
	\leq& C\|\frac{ \P^{n+1}_h \u^{n+1}+\P^{n+1}_h \u^{n-1}}{2}\|_{L^\infty} \| \e^{n+1}_\u + \e^{n-1}_\u \|_{L^2} \|\eta^n_\u\|_{L^2}\notag\\
	\leq & C \tau \| \u^{n+1} + \u^{n-1}\|_{L^\infty}   \| \nabla(\e^{n+1}_\u+\e^{n-1}_\u) \| _{L^2}\|\eta^n_\u\|_{L^2}\notag\\
	\leq & C\tau h^4 +\epsilon_3 \tau \| \nabla(\e^{n+1}_\u+\e^{n-1}_\u)\|^2_{L^2}.\notag
\end{align}

By the H\"{o}lder inequality, Young inequality, one has 
\begin{align}
	&|Y_4|+|Y_5| \\
		\leq &C \tau \| \frac{\P^{n+1}_h \u^{n+1}+\P^{n+1}_h \u^{n-1}}{2}\|_{L^3} \| \nabla (\P^{n+1}_h \u^n -  \frac{ \P^{n+1}_h \u^{n+1}+\P^{n+1}_h \u^{n-1}}{2})\|_{L^2} \| \e^{n+1}_\u + \e^{n-1}_\u \|_{L^6}\notag\\
		\leq& C \tau \| \u^{n+1} +\u^{n-1}\|_{L^3}\| \nabla (\u^n -  \frac{\u^{n+1}+ \u^{n-1}}{2})\|_{L^2} \| \nabla(\e^{n+1}_\u+\e^{n-1}_\u) \| _{L^2}\notag\\
		\leq &C\tau | \nabla (\u^n -  \frac{\u^{n+1}+ \u^{n-1}}{2})\|^2_{L^2}+\epsilon_3 \tau \| \nabla(\e^{n+1}_\u+\e^{n-1}_\u)\|^2_{L^2}.\notag
\end{align}

Furthermore

\begin{align}
|Y_6|\leq &2\tau B \left( \e^n_\u, \frac{\u^{n+1} + \u^{n-1}}{2}, \e^{n+1}_\u + \e^{n-1}_\u\right) \\
\leq & C\tau \| \e^n_\u\|_{L^2}  \| \frac{\u^{n+1} + \u^{n-1}}{2} \|_{L^3} \|\e^{n+1}_\u + \e^{n-1}_\u\|_{L^6} \notag\\
\leq & C\tau \|\e^n_\u\|_{L^2} +\epsilon_3 \tau \| \nabla(\e^{n+1}_\u+\e^{n-1}_\u)\|^2_{L^2}.\notag
\end{align}

By using inverse inequality (\ref{biocnlf-inverse1}) and projection error (\ref{biocnlf-projection-u}), for sufficiently small $h$ such that $Ch\leq C_1$, we can derive 
\begin{align}
|Y_7| =&\left | -2\tau B \left( \e^n_\u,\e^{n+1}_\u + \e^{n-1}_\u, \frac{\eta^{n+1}_\u + \eta_\u^{n-1}}{2} \right)\right |\\
\leq &C \tau \|  \e^n_\u\|_{L^2}  \| \nabla(e^{n+1}_\u + \e^{n-1}_\u) \|_{L^2}\| \frac{\eta^{n+1}_\u + \eta_\u^{n-1}}{2}\|_{L^{\infty}}\notag \\
\leq & C \tau h^{-1} \|  \e^n_\u\|_{L^2}  \| \nabla(e^{n+1}_\u + \e^{n-1}_\u) \|_{L^2}\| \frac{\eta^{n+1}_\u + \eta_\u^{n-1}}{2}\|_{L^{2}}\notag\\
\leq &C \tau h\|  \e^n_\u\|_{L^2}  \| \nabla(e^{n+1}_\u + \e^{n-1}_\u) \|_{L^2}\notag\\
\leq & C_1 \tau \|  \e^n_\u\|_{L^2}^2+\epsilon_3 \tau \| \nabla(\e^{n+1}_\u+\e^{n-1}_\u)\|^2_{L^2}.\notag
\end{align}

By the mathematical  induction, and for sufficiently small $h$ such that $Ch\leq C_1$, we have
\begin{align}
	|Y_8| \leq &C\tau \| \e_\u^n\|_{L^\infty} \| \nabla ( \frac{\e^{n+1}_\u + \e^{n-1}_\u}{2})\|_{L^2}\| \e^{n+1}_\u + \e^{n-1}_\u\|_{L^2}\\
	\leq &C \tau h^{-1}\| \e_\u^n\|_{L^2} \| \nabla ( \frac{\e^{n+1}_\u + \e^{n-1}_\u}{2})\|_{L^2}\| \e^{n+1}_\u + \e^{n-1}_\u\|_{L^2}\notag\\
	\leq& C \tau h \| \nabla ( \frac{\e^{n+1}_\u + \e^{n-1}_\u}{2})\|_{L^2}\| \e^{n+1}_\u + \e^{n-1}_\u\|_{L^2}\notag\\
	\leq& C _1 \tau (\| \e^{n+1}_\u\|^2_{L^2}+\| \e^{n-1}_\u\|^2_{L^2}) +\epsilon_3 \tau \| \nabla(\e^{n+1}_\u+\e^{n-1}_\u)\|^2_{L^2}.\notag
\end{align}

For last term, we have 
\begin{align}\label{biocnlf-Btrin2}
		|Y_9| \leq &\| \P^{n+1}_h\u^n\|_{L^{\infty}}\| \nabla \frac{\e^{n+1}_\u + \e^{n-1}_\u}{2}\|_{L^2} \|\e^{n+1}_\u + \e^{n-1}_\u\|_{L^2}\\
		\leq & C _1 \tau (\| \e^{n+1}_\u\|^2_{L^2}+\| \e^{n-1}_\u\|^2_{L^2}) +\epsilon_3 \tau \| \nabla(\e^{n+1}_\u+\e^{n-1}_\u)\|^2_{L^2}.\notag
\end{align}

Substituting (\ref{biocnlf-Btrin})--(\ref{biocnlf-Btrin2}) into $X_7$, one has 
\begin{align}
	|X_7| \leq &C \tau \| \nabla (\u^n - \frac{\u^{n+1} + \u^{n-1})}{2}\|^2_{L^2}+ \epsilon_3 \tau \| \nabla(\e^{n+1}_\u+\e^{n-1}_\u)\|^2_{L^2}\notag
	\\
	&+ C\tau h^4+C \tau (\| \e^{n+1}_\u\|^2_{L^2}+\| \e^{n-1}_\u\|^2_{L^2}+ \|\e^n_\u\|_{L^2}) .
\end{align}

According to the same technique in (\ref{biocnlf-Btrin})--(\ref{biocnlf-Btrin2}), we can handle $X_8$
\begin{align}\label{biocnlf-btrin}
	X_8=
	&2\tau \left|  b \left( \u^n, c^n, e^{n+1}_c+e^{n-1}_c \right) -  B \left( \u_h^n, \frac{c_h^{n+1} + c_h^{n-1}}{2}, e^{n+1}_c+e^{n-1}_c \right) \right|  \\
	= &2\tau B\left( \u^n - \frac{\u^{n+1} + \u^{n-1}}{2}, c^n, e^{n+1}_c+e^{n-1}_c \right) +2\tau B \left( \frac{\eta_\u^{n+1}+\eta_\u^{n-1}}{2}, c^n, e^{n+1}_c +e^{n-1}_c \right) \notag\\
	& + 2\tau B \left( \frac{ \P^{n+1}_h \u^{n+1}+\P^{n+1}_h \u^{n-1}}{2}, \eta^n_c, e^{n+1}_c + e^{n-1}_c \right)\notag \\
	&+ 2\tau B \left( \frac{\P^{n+1}_h \u^{n+1}+\P^{n+1}_h \u^{n-1}}{2}, R^{n+1}_h c^n -  \frac{ R^{n+1}_h c^{n+1}+R^{n+1}_h c^{n-1}}{2}, e^{n+1}_c + e^{n-1}_c \right) \notag\\
	& + 2\tau B\left( \frac{\P^{n+1}_h \u^{n+1}+\P^{n+1}_h \u^{n-1}}{2} - \P^{n+1}_h \u^n, \frac{R^{n+1}_h c^{n+1}+R^{n+1}_h c^{n-1}}{2}, e^{n+1}_c + e^{n-1}_c  \right)\notag \\&+ 2\tau B \left( \e^n_\u, \frac{c^{n+1} + c^{n-1}}{2}, e^{n+1}_c + e^{n-1}_c \right) - 2\tau B \left( \e^n_\u, \frac{\eta^{n+1}_c + \eta_c^{n-1}}{2}, e^{n+1}_c + e^{n-1}_c\right)\notag \\
	&-2\tau B \left( \e^n_\u, \frac{e^{n+1}_c + e^{n-1}_c}{2}, e^{n+1}_c + e^{n-1}_c\right)+2\tau B \left( \P^{n+1}_h\u^n, \frac{e^{n+1}_c + e^{n-1}_c}{2}, e^{n+1}_c + e^{n-1}_c\right)\notag\\
	\leq& C \tau \| \nabla (c^n - \frac{c^{n+1} + c^{n-1})}{2}\|^2_{L^2}+ \epsilon_4 \tau \| \nabla(e^{n+1}_c+e^{n-1}_c)\|^2_{L^2}\notag
	\\
	&+ C\tau h^4+C \tau (\| e^{n+1}_c\|^2_{L^2}+\| e^{n-1}_c\|^2_{L^2}+ \|\e^n_\u\|_{L^2}).\notag
\end{align}

For the last two terms of (\ref{biocnlf-errorequation2}), applying  the H\"{o}lder inequality, Young inequality, we have
\begin{align}
	X_9+X_{10} \leq&2\tau \left| \big( g (1+\gamma c^n)i_2,\e^{n+1}_\u+\e^{n-1}_\u\big) -\big( g (1+\gamma c^n_h)i_2,\e^{n+1}_\u+\e^{n-1}_\u \big)  \right|\\
	&+2\tau\left| U(c^n,\frac{\partial e^{n+1}_c+e^{n-1}_c }{\partial x_2}) -U(c^n_h,\frac{\partial e^{n+1}_c+e^{n-1}_c }{\partial x_2})\right|\notag\\
	\leq &C \tau \| \eta^n_c + e^n_c\|_{L^2} \| \e^{n+1}_\u+\e^{n-1}_\u \|_{L^2}+C \tau \| \eta^n_c + e^n_c\|_{L^2} \| e^{n+1}_c+e^{n-1}_c \|_{L^2}  \notag \\
	\leq & C \tau h^4 + C\tau \|e^n_c\|^2_{L^2} + \epsilon_3 \tau \| \nabla(\e^{n+1}_\u+\e^{n-1}_\u)\|^2_{L^2}+\epsilon_4 \tau \| \nabla(e^{n+1}_c+e^{n-1}_c)\|^2_{L^2}.\notag
\end{align}

Finally, substituting the above inequalities into (\ref{biocnlf-errorequation2}), for sufficiently small $\epsilon_3, \epsilon_4$, we can derive 
\begin{align}
	\| \e&^{n+1}_\u\|^2_{L^2}+\| e^{n+1}_c\|^2_{L^2}-\| \e^{n-1}_\u\|^2_{L^2}-\| e^{n-1}_c\|^2_{L^2}\\
	&+ \kappa \tau \| \nabla (\e^{n+1}_\u+\e^{n+1}_\u)\|^2_{L^2}+ \theta\tau \|   \nabla (e^{n+1}_c + e^{n-1} _c)\|^2_{L^2} \notag\\
	\leq & C \tau h^4 + C \tau (\| e^{n+1}_c\|^2_{L^2}+\| e^{n-1}_c\|^2_{L^2}+ \|e^n_c\|_{L^2})
+C \tau (\| \e^{n+1}_\u\|^2_{L^2}+\| \e^{n-1}_\u\|^2_{L^2}+ \|\e^n_\u\|_{L^2})\notag\\
&+C \tau\big(\left\| \frac{\u^{n+1} - \u^{n-1}}{2\tau} - \u_t(t_n) \right\|_{L^2}^2+\left\| \frac{c^{n+1} - c^{n-1}}{2\tau} - c_t(t_n) \right\|_{L^2}^2\big)   \notag\\
&+C \tau \big( \| \nabla (\u^n - \frac{\u^{n+1} + \u^{n-1})}{2}\|^2_{L^2}+| \nabla ( \frac{ c^{n+1}+ c^{n-1}}{2} -c^n) \|^2_{L^2}\big)\notag\\
& + C \big(\int_{\Omega} \int_{t^{n-1}}^{t^{n+1}} |\eta_{\u t}|^2 dt dx+ \int_{\Omega} \int_{t^{n-1}}^{t^{n+1}} |\eta_{c t}|^2 dt dx \big)\notag.
\end{align}

Summing from $n=1$ to $n=N-1$, we get
\begin{align}
		\| \e&^{n+1}_\u\|^2_{L^2}+\| \e^{n}_\u\|^2_{L^2}+\|e^{n+1}_c\|^2_{L^2}+\| e^{n}_c\|^2_{L^2}\\
	&+ \kappa \tau \sum_{n=1}^{N-1}\| \nabla (\e^{n+1}_\u+\e^{n+1}_\u)\|^2_{L^2}+ \theta\tau\sum_{n=1}^{N-1} \|   \nabla (e^{n+1}_c + e^{n-1} _c)\|^2_{L^2} \notag\\
	\leq& C(\tau^4+h^4)+\big(\|\e^1_\u\|_{L^2}+\|\e^0_\u\|_{L^2} +\|e^1_c\|_{L^2}+\|e^0_c\|_{L^2}\big)+C\tau\sum_{n=1}^{N-1}\big(e^{n+1}_\u\|^2_{L^2}+\| e^{n+1}_c\|^2_{L^2} \big)\notag
\end{align}

Appplying Gronwall inequalities in Lemma \ref{biobdf-11}, we complete the proof of Theorem \ref{biocnlf-theorem1}.\end{proof}

Finally, according the projection error (\ref{biocnlf-projection-u}), (\ref{biocnlf-projection-c}) and Theorem \ref{biocnlf-theorem1}, we can derive
\begin{align}
	\| \mathbf{u}^i - \mathbf{u}_h^i \|_{L^2} = \| \mathbf{u}^i - \P^{n+1}_h \mathbf{u}^i + \P^{n+1}_h \mathbf{u}^i - \mathbf{u}_h^i \|_{L^2} \leq \| \mathbf{u}^i - \P^{n+1}_h \mathbf{u}^i \|_{L^2} + \| e_{\mathbf{u}}^i \|_{L^2} \leq C(\tau^4 + h^4).
\end{align}

Similarly
\begin{align}
	\| c^i - c_h^i \|_{L^2} = \| c^i - R^{n+1}_h c^i + R^{n+1}_h c^i - c_h^i \|_{L^2} \leq \| c^i - R^{n+1}_h c^i \|_{L^2} + \| e_c^i \|_{L^2} \leq C(\tau^4 + h^4).
\end{align}

The proof of Theorem \ref{biocnlf-theorem2} is done.
	\section{Numerical experience}\label{sec-5}
In this section, to validate the theoretical results of the fully discrete CNLF scheme (\ref{biocnlf-1})-(\ref{biocnlf-3}), we carry out numerical experiments using the software package FreeFEM++ \cite{hecht2012}. In particular, a test problem with an exact analytical solution is employed to confirm both the stability and the optimal convergence rate of the proposed method.

We consider the computational domain $\Omega = [0,1] \times [0,1]$ with the parameters $\theta=\gamma = 1$ and the final time $T = 1.0$. 
The exact solutions are taken from \cite{lichenyangbio}. 
\begin{align}\left\{\begin{aligned}
		&\u(x,y,t)=(yexp(-t)(2y - 1)(y - 1),  -xexp(-t)(2x - 1)(x - 1))^{T},\\
		&p(x,y,t)= exp(-t)(2x - 1)(2y - 1),\\
		& c(x,y,t)=exp(-t)sin(\pi x)sin(\pi y),
	\end{aligned}\right.\end{align}
Denote
\begin{align*}
	\|r - r_h\|_{L^2} &= \|r(t_N) - r_h^N\|_{L^2}, \\
	\|\mathbf{v} - \mathbf{v}_h\|_{L^2} &= \|\mathbf{v}(t_N) - \mathbf{v}_h^N\|_{L^2}, \\
	\|q - q_h\|_{l_2(L^2)} &= \left(\tau \sum_{n=1}^{N} \|q(t_n) - q_h^n\|_{L^2}^2\right)^{1/2}.
\end{align*}

The Taylor--Hood element (P1b--P1) is employed to approximate the velocity and pressure fields, while a linear Lagrange element is used for the concentration variable. 
The true solution is adopted to initialize the first time step, and the backward Euler scheme is applied to compute $\mathbf{u}_h^1$. 
	\begin{figure}
	\centering
	\includegraphics[width=0.6\textwidth]{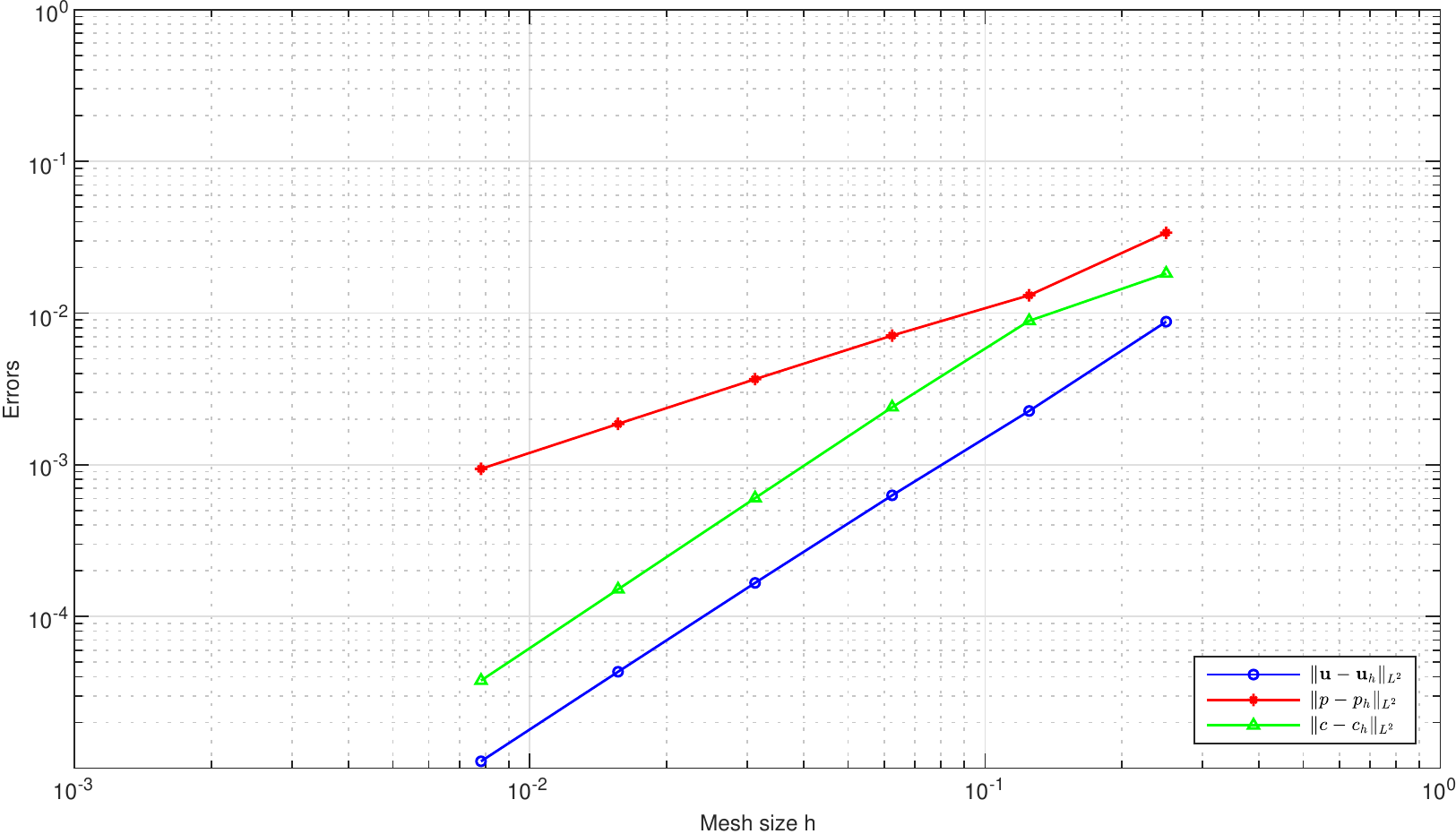}
	\caption{Convergence history of $(\u,p,c)$ for $\nu=1$.}
	\label{v1}
\end{figure}
\begin{figure}
	\centering
	\includegraphics[width=0.6\textwidth]{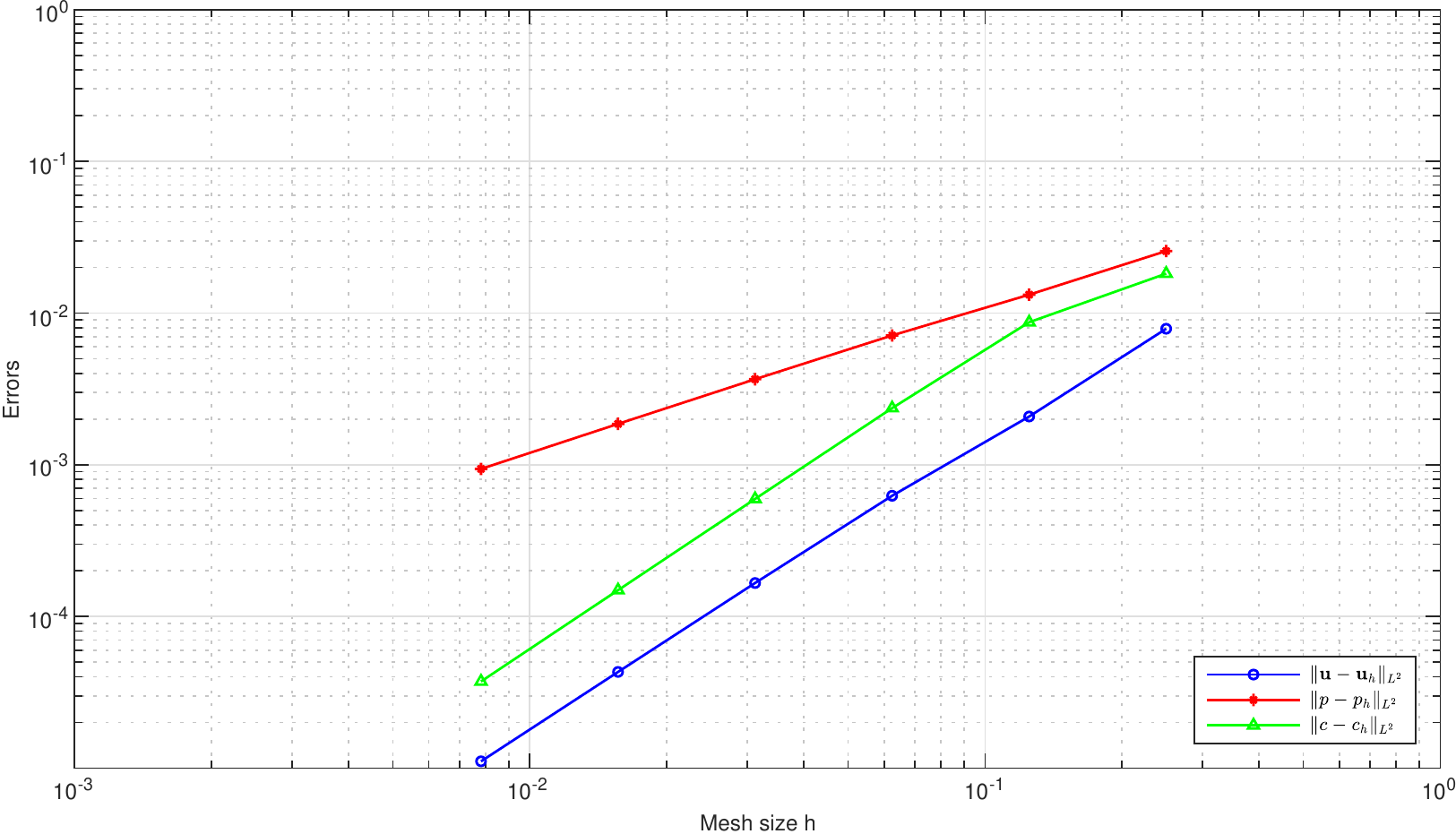}
	\caption{Convergence history of $(\u,p,c)$ for $\nu=1+0.1c$.}
	\label{v2}
\end{figure}
\begin{figure}
	\centering
	\includegraphics[width=0.6\textwidth]{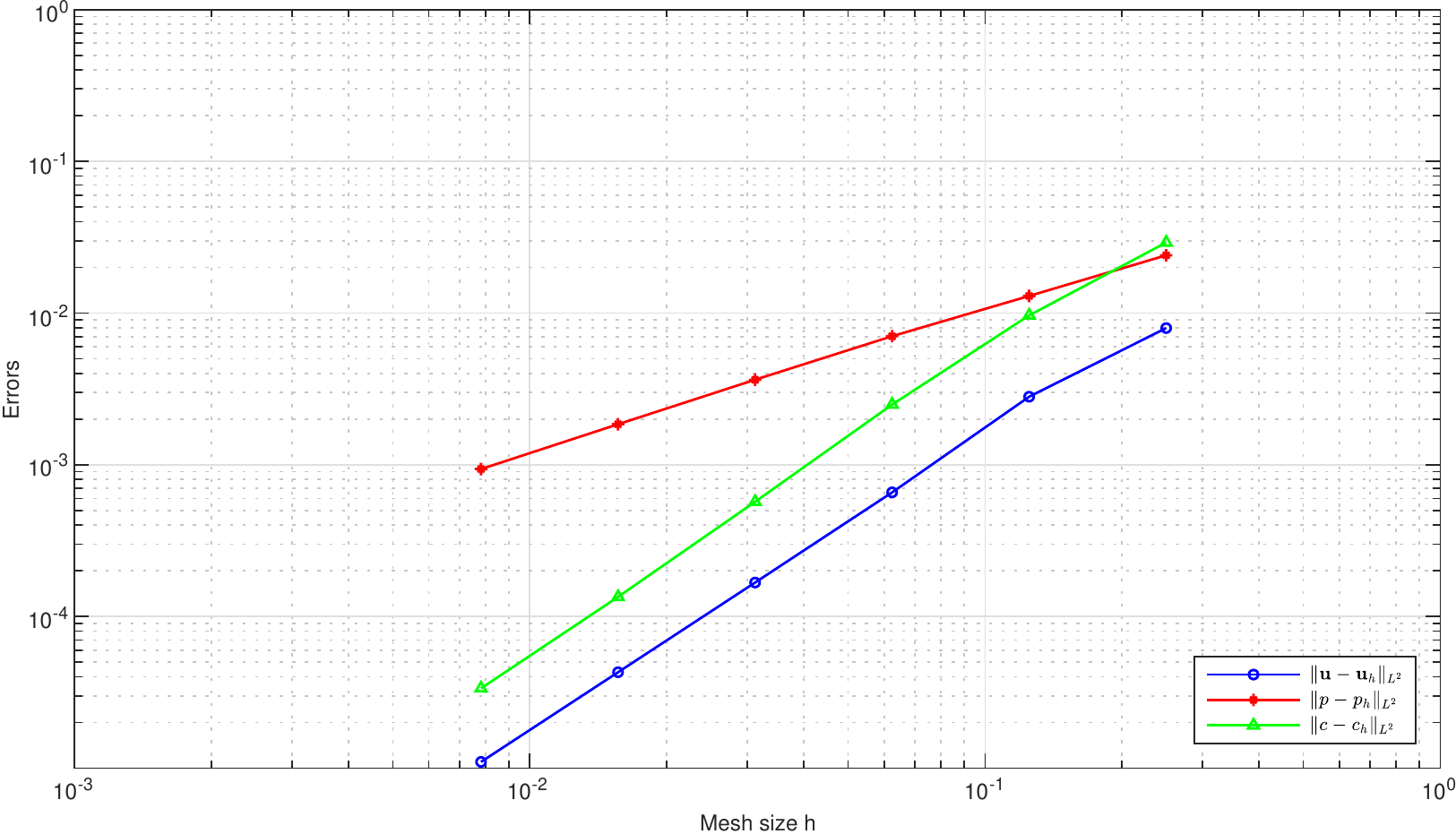}
	\caption{Convergence history of $(\u,p,c)$ for $\nu=exp(c)$.}
	\label{v3}
\end{figure}
To investigate the influence of different viscosity laws, we consider the following three cases:
\begin{equation*}
	\nu = 1, \qquad 
	\nu = 1 + 0.1c, \qquad 
	\nu = e^{c}.
\end{equation*}

To test the stability of the method, we present the stability results for different viscosity in Table \ref{stabilityv=1}, \ref{stabilityv=1+0.1c}, \ref{stabilityv=exp(c)}. To demonstrate the convergence behavior of the proposed scheme, we set the time step size as $\tau = h$ and refine the spatial mesh with
$
h = 1/4,\ 1/8,\ 1/16,\ 1/32,\ 1/64,\ 1/128.
$
The errors between the numerical and exact solutions for all variables at the final time $T = 1$ under different mesh sizes are summarized in Tables~\ref{l2v=1}, \ref{l2v=1+0.1c}, \ref{l2v=exp(c)} 
and illustrated graphically in Figure \ref{v1},\ref{v2},\ref{v3}.
It can be observed that the proposed fully discrete scheme achieves an optimal second-order accuracy in the $L^2$-norm for both the velocity $\mathbf{u}$ and concentration $c$. 
The pressure $p$ exhibits first-order accuracy in the $L^2$-norm, which is consistent with the theoretical prediction.

Furthermore, as shown in Tables~\ref{H1v=1},\ref{H1v=1+0.1c},\ref{H1v=exp(c)}, the velocity and concentration also exhibit first-order convergence in the $H^1$-norm.
The corresponding relative errors are reported in Tables~\ref{relativev=1},\ref{relativev=1+0.1c},\ref{relativev=exp(c)}, which again confirm the second-order accuracy of the proposed method in the $L^2$-norm for $\mathbf{u}$ and $c$, and first-order accuracy for the pressure variable. 

Overall, the numerical results validate the theoretical convergence rates and demonstrate the robustness of the proposed algorithm for different viscosity.

\begin{table}[htbp]  
	\centering  
	\caption{Stability result with $\nu=1$}   
	 	\setlength{\tabcolsep}{4.3mm}{ 
		\rowcolors{1}{gray!20}{white} 
 \begin{tabular}{cccccc}         
 	 $\tau=h$& $\| \u_h\|_{L^2} $   & $\| \u_h\|_{H^1}$   & $\| c_h\|_{L^2} $  & $\| c_h\|_{H^1}$   & $\| p_h\|_{L^2} $   \\  
 	   1/4   & 0.0388542 & 0.313623 & 0.171371 & 0.820493 & 0.136281 \\    1/8   & 0.0354237 & 0.159925 & 0.175669 & 0.795545 & 0.13232 \\    1/16  & 0.0355727 & 0.0805211 & 0.181718 & 0.811241 & 0.128778 \\    1/32  & 0.0358079 & 0.0404005 & 0.183381 & 0.815722 & 0.126026 \\    1/64  & 0.035876 & 0.0202365 & 0.1838 & 0.816847 & 0.124412 \\    1/128 & 0.0358946 & 0.0101282 & 0.183905 & 0.817129 & 0.123548 \\    \end{tabular} }
	 \label{stabilityv=1}
\end{table}%

	\begin{table}[htbp]  
		\centering  
		\caption{Numerical errors and convergence rates of $(\u,p,c)$  in $L^2$-norm with $\nu=1$}   
			 	\setlength{\tabcolsep}{3.8mm}{ 
			\rowcolors{1}{gray!20}{white}  \begin{tabular}{ccccccc}          
			$\tau=h$&  $\|\u -\u_h\|_{L^2}$ & rate  &$\| c -c_h\|_{L^2}$ & rate  & $\| p -p_h\|_{L^2}$   & rate \\   
			 1/4   & 0.0087769 &       & 0.0182156 &       & 0.033836 &  \\  
			   1/8   & 0.002263 & 1.96  & 0.0088862 & 1.04  & 0.0130976 & 1.37  \\  
			     1/16  & 0.0006286 & 1.85  & 0.002394 & 1.89  & 0.0071104 & 0.88  \\   
			      1/32  & 0.0001664 & 1.92  & 0.000603 & 1.99  & 0.0036656 & 0.96  \\    
			      1/64  & 4.32E-05 & 1.95  & 0.000151 & 2.00  & 1.86E-03 & 0.98  \\   
			       1/128 & 1.11E-05 & 1.96  & 3.78E-05 & 2.00  & 9.40E-04 & 0.98  \\    \end{tabular} }
			        \label{l2v=1}
	\end{table}%

		\begin{table}[htbp]  \centering  \caption{Numerical errors and convergence rates of $(\u,p,c)$ in $H^1$-norm with $\nu=1$}   
				 	\setlength{\tabcolsep}{7.3mm}{ 
				\rowcolors{1}{gray!20}{white} 
			 \begin{tabular}{ccccc}          
			 	$\tau=h$	& $\|\u -\u_h\|_{H^1}$ & rate  & $\|c -c_h\|_{H^1}$& rate  \\  
				  1/4   & 0.411564 &       & 0.316306 &  \\    
				  1/8   & 0.209972 & 0.97  & 0.159082 & 0.99  \\   
				   1/16  & 0.10573 & 0.99  & 0.0800549 & 0.99  \\
				       1/32  & 0.053031 & 1.00  & 0.0400932 & 1.00  \\   
				        1/64  & 0.026556 & 1.00  & 0.0200549 & 1.00  \\   
				         1/128 & 0.0132893 & 1.00  & 0.0100285 & 1.00  \\    \end{tabular} }
				         \label{H1v=1}
				         \end{table}%

			\begin{table}[htbp]  
				\centering  
				\caption{Relative errors and convergence rates of $(\u,p,c)$ with $\nu=1$}    
					\setlength{\tabcolsep}{4.1mm}{ 
					\rowcolors{1}{gray!20}{white} 
				\begin{tabular}{ccccccc}      
					  $\tau=h$	& $\frac{\| \u-\u_h\|_{L^2}}{\| \u\|_{L^2}}$ & rate  &$\frac{\| c-c_h\|_{L^2}}{\| c \|_{L^2}}$   & rate  &$\frac{\| p-p_h\|_{L^2}}{\| p\|_{L^2}}$ &rate \\ 
					       1/4   & 0.225894 &       & 0.106294 &       & 0.24828 &  \\    1/8   & 0.0638837 & 1.82  & 0.0505849 & 1.07  & 0.0989841 & 1.33  \\   
					        1/16  & 0.0176721 & 1.85  & 0.0131745 & 1.94  & 0.0552144 & 0.84  \\   
					         1/32  & 0.0046483 & 1.93  & 0.0032885 & 2.00  & 0.0290859 & 0.92  \\  
					           1/64  & 0.0012037 & 1.95  & 0.0008215 & 2.00  & 0.014917 & 0.96  \\   
					            1/128 & 0.0003087 & 1.96  & 0.0002053 & 2.00  & 0.0076048 & 0.97  \\    \end{tabular} } \label{relativev=1}
				    \end{table}%

\begin{table}[htbp]  
	\centering  
	\caption{Stability result with $\nu=1+0.1c$}   
	\setlength{\tabcolsep}{4.3mm}{ 
		\rowcolors{1}{gray!20}{white} 
		\begin{tabular}{cccccc}         
			$\tau=h$& $\| \u_h\|_{L^2} $   & $\| \u_h\|_{H^1}$   & $\| c_h\|_{L^2} $  & $\| c_h\|_{H^1}$   & $\| p_h\|_{L^2} $   \\  
		1/4   & 0.0386815 & 0.312889 & 0.171306 & 0.820245 & 0.134454 \\    1/8   & 0.0354475 & 0.159799 & 0.175853 & 0.796381 & 0.132311 \\    1/16  & 0.0355749 & 0.0805206 & 0.181751 & 0.81139 & 0.12877 \\    1/32  & 0.0358083 & 0.0404026 & 0.183389 & 0.815759 & 0.126023 \\    1/64  & 0.0358761 & 0.0202378 & 0.183802 & 0.816856 & 0.124411 \\    1/128 & 0.0358946 & 0.0101289 & 0.183905 & 0.817131 & 0.123548 \\    \end{tabular} }
	\label{stabilityv=1+0.1c}
\end{table}%
%

\begin{table}[htbp]  
	\centering  
	\caption{Numerical errors and convergence rates of $(\u,p,c)$  in $L^2$-norm with $\nu=1+0.1c$}   
	\setlength{\tabcolsep}{3.8mm}{ 
		\rowcolors{1}{gray!20}{white}  \begin{tabular}{ccccccc}          
			$\tau=h$&  $\|\u -\u_h\|_{L^2}$ & rate  &$\| c -c_h\|_{L^2}$ & rate  & $\| p -p_h\|_{L^2}$   & rate \\   
1/4   & 0.0078822 &       & 0.0181978 &       & 0.0256366 &  \\    1/8   & 0.0020801 & 1.92  & 0.0087013 & 1.06  & 0.0132166 & 0.96  \\    1/16  & 0.0006247 & 1.74  & 0.0023634 & 1.88  & 0.0071215 & 0.89  \\    1/32  & 0.0001662 & 1.91  & 0.0005956 & 1.99  & 0.0036635 & 0.96  \\    1/64  & 4.31E-05 & 1.95  & 0.0001491 & 2.00  & 1.86E-03 & 0.98  \\    1/128 & 1.11E-05 & 1.96  & 3.73E-05 & 2.00  & 9.39E-04 & 0.98  \\    \end{tabular} }
	\label{l2v=1+0.1c}
\end{table}%

\begin{table}[htbp]  \centering  \caption{Numerical errors and convergence rates of $(\u,p,c)$ in $H^1$-norm with $\nu=1+0.1c$}   
	\setlength{\tabcolsep}{7.3mm}{ 
		\rowcolors{1}{gray!20}{white} 
		\begin{tabular}{ccccc}          
			$\tau=h$	& $\|\u -\u_h\|_{H^1}$ & rate  & $\|c -c_h\|_{H^1}$& rate  \\  
1/4   & 0.410592 &       & 0.316334 &  \\    1/8   & 0.209804 & 0.97  & 0.159045 & 0.99  \\    1/16  & 0.105729 & 0.99  & 0.080051 & 0.99  \\    1/32  & 0.0530339 & 1.00  & 0.0400927 & 1.00  \\    1/64  & 0.0265578 & 1.00  & 0.0200548 & 1.00  \\    1/128 & 0.0132903 & 1.00  & 0.0100285 & 1.00  \\     \end{tabular} }
	\label{H1v=1+0.1c}
\end{table}%

\begin{table}[htbp]  
	\centering  
	\caption{Relative errors and convergence rates of $(\u,p,c)$ with $\nu=1+0.1c$}    
	\setlength{\tabcolsep}{4.1mm}{ 
		\rowcolors{1}{gray!20}{white} 
		\begin{tabular}{ccccccc}      
			$\tau=h$	& $\frac{\| \u-\u_h\|_{L^2}}{\| \u\|_{L^2}}$ & rate  &$\frac{\| c-c_h\|_{L^2}}{\| c \|_{L^2}}$   & rate  &$\frac{\| p-p_h\|_{L^2}}{\| p\|_{L^2}}$ &rate \\ 
	  1/4   & 0.203772 &       & 0.10623 &       & 0.190672 &  \\    1/8   & 0.0586805 & 1.80  & 0.0494804 & 1.10  & 0.09989 & 0.93  \\    1/16  & 0.0175609 & 1.74  & 0.0130032 & 1.93  & 0.055304 & 0.85  \\    1/32  & 0.0046412 & 1.92  & 0.0032475 & 2.00  & 0.0290699 & 0.93  \\    1/64  & 0.0012022 & 1.95  & 0.0008113 & 2.00  & 0.0149105 & 0.96  \\    1/128 & 0.0003084 & 1.96  & 0.0002028 & 2.00  & 0.0076029 & 0.97  \\   \end{tabular} } \label{relativev=1+0.1c}
\end{table}%

\begin{table}[htbp]  
	\centering  
	\caption{Stability result with $\nu=exp(c)$}   
	\setlength{\tabcolsep}{4.3mm}{ 
		\rowcolors{1}{gray!20}{white} 
		\begin{tabular}{cccccc}         
			$\tau=h$& $\| \u_h\|_{L^2} $   & $\| \u_h\|_{H^1}$   & $\| c_h\|_{L^2} $  & $\| c_h\|_{H^1}$   & $\| p_h\|_{L^2} $   \\  
		 1/4   & 0.0363009 & 0.311591 & 0.157871 & 0.759323 & 0.134105 \\    1/8   & 0.0345904 & 0.159779 & 0.175826 & 0.7974 & 0.132083 \\    1/16  & 0.0355937 & 0.0804668 & 0.182137 & 0.813323 & 0.128678 \\    1/32  & 0.0358143 & 0.0403885 & 0.183464 & 0.816105 & 0.125996 \\    1/64  & 0.0358771 & 0.0202471 & 0.183819 & 0.816931 & 0.124403 \\    1/128 & 0.0358948 & 0.0101343 & 0.183909 & 0.81715 & 0.123545 \\ 
	     \end{tabular} }
	\label{stabilityv=exp(c)}
\end{table}%
%

\begin{table}[htbp]  
	\centering  
	\caption{Numerical errors and convergence rates of $(\u,p,c)$  in $L^2$-norm with $\nu=exp(c)$}   
	\setlength{\tabcolsep}{3.8mm}{ 
		\rowcolors{1}{gray!20}{white}  \begin{tabular}{ccccccc}          
			$\tau=h$&  $\|\u -\u_h\|_{L^2}$ & rate  &$\| c -c_h\|_{L^2}$ & rate  & $\| p -p_h\|_{L^2}$   & rate \\   
		 1/4   & 0.007963 &       & 0.0291921 &       & 0.0240411 &  \\    1/8   & 0.002809 & 1.50  & 0.0096519 & 1.60  & 0.0129657 & 0.89  \\    1/16  & 0.0006577 & 2.09  & 0.0024943 & 1.95  & 0.0070393 & 0.88  \\    1/32  & 0.0001674 & 1.97  & 0.0005704 & 2.13  & 0.0036358 & 0.95  \\    1/64  & 4.29E-05 & 1.97  & 0.0001347 & 2.08  & 1.85E-03 & 0.98  \\    1/128 & 1.10E-05 & 1.97  & 3.36E-05 & 2.00  & 9.37E-04 & 0.98  \\    \end{tabular} }
	\label{l2v=exp(c)}
\end{table}%

\begin{table}[htbp]  \centering  \caption{Numerical errors and convergence rates of $(\u,p,c)$ in $H^1$-norm with $\nu=exp(c)$}   
	\setlength{\tabcolsep}{7.3mm}{ 
		\rowcolors{1}{gray!20}{white} 
		\begin{tabular}{ccccc}          
			$\tau=h$	& $\|\u -\u_h\|_{H^1}$ & rate  & $\|c -c_h\|_{H^1}$& rate  \\  
	   1/4   & 0.409354 &       & 0.318466 &  \\    1/8   & 0.210096 & 0.96  & 0.165691 & 0.94  \\    1/16  & 0.105655 & 0.99  & 0.0823851 & 1.01  \\    1/32  & 0.0530387 & 0.99  & 0.0403928 & 1.03  \\    1/64  & 0.0265718 & 1.00  & 0.0200554 & 1.01  \\    1/128 & 0.0132977 & 1.00  & 0.0100285 & 1.00  \\      \end{tabular} }
	\label{H1v=exp(c)}
\end{table}%

\begin{table}[htbp]  
	\centering  
	\caption{Relative errors and convergence rates of $(\u,p,c)$ with $\nu=exp(c)$}    
	\setlength{\tabcolsep}{4.1mm}{ 
		\rowcolors{1}{gray!20}{white} 
		\begin{tabular}{ccccccc}      
			$\tau=h$	& $\frac{\| \u-\u_h\|_{L^2}}{\| \u\|_{L^2}}$ & rate  &$\frac{\| c-c_h\|_{L^2}}{\| c \|_{L^2}}$   & rate  &$\frac{\| p-p_h\|_{L^2}}{\| p\|_{L^2}}$ &rate \\ 
		1/4   & 0.219361 &       & 0.184911 &       & 0.17927 &  \\    1/8   & 0.0812066 & 1.43  & 0.0548949 & 1.75  & 0.0981636 & 0.87  \\    1/16  & 0.0184773 & 2.14  & 0.0136944 & 2.00  & 0.0547046 & 0.84  \\    1/32  & 0.0046728 & 1.98  & 0.0031089 & 2.14  & 0.0288567 & 0.92  \\    1/64  & 0.0011944 & 1.97  & 0.0007325 & 2.09  & 0.0148456 & 0.96  \\    1/128 & 0.0003052 & 1.97  & 0.0001829 & 2.00  & 0.0075843 & 0.97  \\  \end{tabular} }
		 \label{relativev=exp(c)}
\end{table}%

\section{Conclusion}\label{sec-6}
In this paper, we present the unconditional stability and error estimates of the decoupled Crank--Nicolson Leap--Frog (CNLF) method for solving unsteady bioconvection flows with concentration-dependent viscosity. Numerical results are provided for a test problem with an analytical solution, demonstrating that the decoupled CNLF method performs robustly. The numerical experiments indicate that the method achieves second-order accuracy in both time and space for the $L^2$-norms of $\mathbf{u}_h$ and $c_h$. Compared with the CNLE (or CNSLE) scheme, the CNLF method attains the same accuracy with improved efficiency. In future work, we plan to extend the CNLF framework to more complex bioconvection systems, including the Chemotaxis--Navier--Stokes system, the Patlak--Keller--Segel--Navier--Stokes system, and the Chemo--Repulsion--Navier--Stokes system.

\section*{CRediT authorship contribution statement}
Chenyang Li:
Writing -- original draft, Visualization, Validation, Software, Methodology, Conceptualization;

\section*{Data availability}
Data will be made available on request.
Declaration of competing interest
The authors declare that they have no known competing financial interests or personal relationships
that could have appeared to influence the work reported in this paper.
\section*{Acknowledgments}
The authors would like to thank the editor and referees for their valuable comments and suggestions
which helped us to improve the results of this paper.
\bibliographystyle{abbrv}
	

\end{document}